\newtheorem{theorem}{Theorem}
\newtheorem{corollary}[theorem]{Corollary}
\newtheorem{lemma}[theorem]{Lemma}
\newtheorem{proposition}[theorem]{Proposition}
\newtheorem{assumption}[theorem]{Assumption}
\newtheorem{definition}[theorem]{Definition}
\newtheorem{remark}[theorem]{Remark}
\newtheorem*{myproof}{Proof}
\newcommand{\Lmu}{L_\mu^2(\mathbb{X})}
\newcommand\restrict[1]{\raisebox{-.5ex}{$|$}_{#1}}
\newcommand{\LV}{\mathcal{L}_\mathbb{V}}
\newcommand{\tL}{\tilde{\mathcal{L}}_m}
\newcommand{\tC}{\tilde{C}_m}
\newcommand{\tA}{\tilde{A}_m}
\newcommand{\braces}[1]{\textup{(}#1\textup{)}}
\newcommand{\rmref}[1]{\textup{\ref{#1}}}
\newcommand{\R}{\ensuremath{\mathbb R}}    
\newcommand{\N}{\ensuremath{\mathbb N}}    
\newcommand{\calL}{\mathcal L}
\newcommand{\veps}{\varepsilon}
\newcommand{\linspan}{\operatorname{span}}
\newcommand{\ol}{\overline}
\newcommand{\wt}{\widetilde}
\providecommand{\floor}[1]{\left \lfloor #1 \right \rfloor }
\providecommand{\ceil}[1]{\left \lceil #1 \right \rceil }
\newcommand{\XX}{\operatorname{\mathbb{X}}}
\newcommand{\MS}[1]{\color{black}#1\color{black}}
\title{Finite-data error bounds for Koopman-based prediction and control\footnote{F.\ Philipp was funded by the Carl Zeiss Foundation within the project \textit{DeepTurb---Deep Learning in und von Turbulenz}. M.\ Schaller was funded by the DFG (project numbers 289034702 and 430154635). K.\ Worthmann gratefully acknowledges funding by the German Research Foundation (DFG; grant WO\ 2056/6-1, project number 406141926). }}
\author[1,2]{Feliks N\"{u}ske}
\author[1]{Sebastian Peitz}
\author[3]{Friedrich Philipp}
\author[3]{Manuel Schaller}
\author[3]{Karl Worthmann}
\affil[1]{Paderborn University, Germany, sebastian.peitz@upb.de}
\affil[2]{Max Planck Institute for Dynamics of Complex Technical Systems, Magdeburg, Germany, nueske@mpi-magdeburg.mpg.de}
\affil[3]{Technische Universit\"at Ilmemau, Germany, \{friedrich.philipp, manuel.schaller, karl.worthmann\}@tu-ilmenau.de}
\begin{document}

\maketitle

\begin{abstract}%
	\noindent The Koopman operator has become an essential tool for data-driven approximation of dynamical (control) systems, e.g., via extended dynamic mode decomposition. Despite its popularity, convergence results and, in particular, error bounds are still 
	scarce. In this paper, we 
	derive probabilistic bounds for the approximation error and the 
	prediction error depending on the number of training data points; for both ordinary and stochastic differential equations \MS{while using either ergodic trajectories or i.i.d.\ samples}. %
	\MS{We illustrate these bounds by means of an example with the Ornstein-Uhlenbeck process. 
	Moreover, we extend our analysis to (stochastic) nonlinear control-affine systems.
	We prove error estimates for a previously proposed approach that exploits the linearity of the Koopman generator to obtain a bilinear surrogate control system and, thus, circumvents
	the curse of dimensionality since the system is not autonomized by augmenting the state by the control inputs. 
	To the best of our knowledge, this is the first finite-data error analysis in the stochastic and/or control setting. 
	Finally, we demonstrate the effectiveness of the bilinear approach by comparing it with state-of-the-art techniques showing its superiority whenever state and control are coupled.} %
\end{abstract}


\section{Introduction}
\label{sec:introduction}
The Koopman framework~\cite{Koo31} is the operator-theoretic basis for a wide range of data-driven methodologies to predict the evolution of nonlinear dynamical systems using linear techniques, see, e.g., \cite{Mez05,RMB+09} or the recent 
survey~\cite{BBKK21} and the references therein. The underlying concept is that observables, which may also be understood as outputs from the systems-and-control perspective, can be propagated forward in time using the linear yet infinite-dimensional Koopman operator or its generator, instead of simulating the nonlinear system and 
evaluating the observable functions. 
Its recent success 
is closely linked to numerically tractable approximation techniques like extended Dynamic Mode Decomposition (eDMD), see, e.g., \cite{WILLIAMS2015,KLUS2016b,KM18b,KNH20} 
for 
numerical techniques and 
convergence results.

	While the Koopman framework is well established, 
	approximation results are typically only established in the infinite-data limit, i.e., if \textit{sufficient} data is available. %
	Recently, Lu and Tartakovsky~\cite{LuTart20} 
	discussed error bounds w.r.t.\ DMD invoking the seminal work~\cite{KM18b} by Korda and Mezi\'{c}. %
	While the authors numerically demonstrate the effectiveness of their approach even for nonlinear parabolic Partial Differential Equations (PDEs), see also their extension
	~\cite{LuTart21}, there remains a significant gap from a more theoretical point of view since 
	the approximation error 
	is assumed to be zero for \textit{finite data}, see~\cite[Remark~3.1]{LuTart20}. 
	Mamakoukas and coworkers~\cite{MCTM21} mimick a Taylor-series expansion based on a particular set of observables to approximate the system dynamics of an Ordinary Differential Equation (ODE). This work may be understood as a promising approach to incorporate (local) knowledge on the system dynamics in the Koopman framework. 
	However, a bound on the prediction error in terms of data is not deduced. Error bounds for Koopman eigenvalues in terms of the finite-data estimation error were derived in~\cite{Webber2021}, but the estimation error itself was not quantified. In~\cite{Mollenhauer2020}, concentration inequalities were 
	applied 
	to bound the estimation error for the co-variance and cross-covariance operators involved in Koopman estimation. %
\MS{In the exhaustive preprint~\cite{kurdila2018koopman}, the authors treat the projection error for different approximation spaces such as, e.g., reproducing kernel Hilbert spaces and wavelets. The estimation error is also discussed briefly in Section 8.5. In \cite{ZhanZuaz21}, besides providing a finite-data error bound on the approximation of the Koopman operator in the context of ODEs, the authors estimate the projection error by means of finite-element analysis. %
In conclusion, to the best of our knowledge\footnote{We are already referring to two authoritative references on preprint servers supporting our claim that finite-data error bounds are still missing; thanks to one of the unknown referees for drawing our attention to the still unpublished work~\cite{kurdila2018koopman}.}, \cite{ZhanZuaz21, kurdila2018koopman} are the only works providing rigorous \MS{error bounds for Koopman-based approximations }of a dynamical system governed by a nonlinear ODE. }

	In this paper, we rigorously derive probabilistic bounds on the approximation error (or finite-data estimation error) and the (multi-step) prediction error 
	for nonlinear Stochastic Differential Equations (SDEs). \MS{This, of course, also includes nonlinear ODEs. }
	The deduced bounds on the approximation error and prediction accuracy explicitly depend on the number of data points used in eDMD. 
	To this end, besides using mass concentration inequalities and a numerical error analysis to deal with the error propagation in time, %
	we employ substantially different techniques in comparison to~\cite{ZhanZuaz21} to provide an additional alternative assumption based on ergodic sampling tailored to \MS{stationary } SDEs. \MS{Further, we illustrate the error bounds 
	for the Ornstein-Uhlenbeck process.}  

	W.r.t.\ the application of Koopman theory in control, a lot of research has been invested over the past years, beginning with the popular \textit{DMD with control} \cite{Proctor2016}, which was later used in Model Predictive Control (MPC) \cite{KM18a}. Another popular method is to use a coordinate transformation into Koopman eigenfunctions \cite{KKB21} or the already mentioned component-wise Taylor series expansion~\cite{MCTM21}. 
	In \cite{LuShin2020}, the prediction error of the method proposed in \cite{Proctor2016} was estimated using the convergence result of \cite{KM18b}. However, the result 
	is of purely asymptotic nature, i.e., it does not state a convergence rate in terms of data points.
	All approaches mentioned until now yield linear surrogate models of the form $Ax+Bu$, i.e.\ the control enters linearly. 
	For general control-affine systems, numerical simulation studies indicate that bilinear surrogate models 
	are better suited, see~\cite{GP17,Peitz2020,BFV21,PB21}. 
	The technique proposed in~\cite{PK19,Peitz2020} constructs its surrogate model from $n_c+1$ autonomous Koopman operators, where $n_c$ is the control dimension. %
The key feature is that the state-space dimension is not augmented by the number of control inputs, which counteracts the curse of dimensionality in comparison to the more widespread approach introduced in~\cite{KM18a}. Compared to \cite{Peitz2020}, we present a detailed analysis of the accuracy regarding both the dictionary size as well as the amount of training data. Even though the bound is rather coarse on the operator level, we demonstrate that it correctly captures the qualitative behavior.
	In this context, we provide a probabilistic bound on the \MS{approximation error of the projected Koopman generator, the projected Koopman semigroup and the respective trajectories. } 
To this end, we extend our results 
towards nonlinear control systems. Besides a rigorous 
bound on the approximation error, we present estimates on the (auto-regressive) prediction accuracy, 
i.e.\ in an open-loop prediction (without feedback). 
This allows for a direct application of our results in MPC. 

The paper is structured as follows. Firstly, in Section~\ref{sec:SDE}, we deduce a rigorous bound on the approximation error for nonlinear SDEs. Then, we extend our analysis to nonlinear control-affine systems in Section~\ref{sec:control}. In Section~\ref{sec:examples}, two numerical simulation studies for the Ornstein-Uhlenbeck system (SDE) and the controlled Duffing equation (nonlinear control-affine system) are presented before conclusions are drawn in Section~\ref{sec:conclusion}. 

\section{Finite-data bounds on the approximation error: nonlinear SDEs} 
\label{sec:SDE}

\noindent
In this section, we analyze the approximation quality of extended Dynamic Mode Decomposition (eDMD) with finitely-many data points for the \MS{finite-dimensional } stochastic differential equation
\begin{align}
\label{e:SDE}
\tag{SDE}
\text{d}X_t = F(X_t)\,\text{d}t + \sigma(X_t) \,\text{d}W_t,
\end{align}
where $X_t \in \mathbb{X}\subset \mathbb{R}^d$ is the state, $F : \mathbb{X} \to \mathbb{R}^d$ is the drift vector field, $\sigma : \mathbb{X} \to \mathbb{R}^{d\times d}$ is the diffusion matrix field, and $W_t$ is a $d$-dimensional Brownian motion. We assume that $F, \, \sigma$ satisfy standard Lipschitz properties to ensure global existence of solutions to~\eqref{e:SDE}, see the textbook \cite{Oksendal2013} for an introduction to this class of systems. We stress that the deterministic case is included by simply setting $\sigma \equiv 0$, leading to the ordinary differential equation
\begin{align*}
\tfrac{\text{d}}{\text{d}t} x(t)= F(x(t)).
\end{align*}
The state space is assumed to be a measure space $(\mathbb{X}, \Sigma_{\XX}, \mu)$ with Borel $\sigma$-algebra $\Sigma_{\XX}$ and probability measure $\mu$. In case of an ODE, the set $\mathbb{X}$ is often assumed to be compact and forward-invariant and the probability measure is the standard Lebesgue measure, cf.\ \cite{ZhanZuaz21}.

\begin{definition}[Koopman operator]
\label{d:koopman}
Let $X_t$ satisfy \eqref{e:SDE} for $t \geq 0$. The Koopman operator semigroup associated with \eqref{e:SDE} is defined by
\begin{align*}
\mathcal{K}^t f(x_0) = \mathbb{E}^{x_0}[f(X_t)] = E[f(X_t)|X_0 = x_0]
\end{align*}
for all bounded measurable functions $f$.
\end{definition}
\noindent In case of ergodic sampling, that is, obtaining data points from a single long trajectory, we will assume invariance of the measure~$\mu$ w.r.t.\ the stochastic process~$X_t$. 
\begin{definition}[Invariant measure with positive density]\label{def:invariant_measure}
	A probability measure  $\mu$ is called invariant if it satisfies
	\[ 
		\int_{\XX} \mathcal{K}^t f \,\mathrm{d}\mu = \int_{\XX} f \,\mathrm{d}\mu
	\]
	for all bounded measurable functions~$f$ and all $t\ge 0$. Further, $\mu$ has an everywhere positive density $\rho:\XX \to \mathbb{R}$ if  $\mu (A) = \int_A \rho(x) \,\mathrm{d}x$ holds for all $A\in\Sigma_{\XX}$.
\end{definition}

\noindent We can now formulate our assumption on the underlying dynamics.
\begin{assumption}\label{as:dyn}
	Let either of the following hold:
	\begin{itemize}
		\item[(a)] The set $\mathbb{X}$ is compact and forward invariant $(\forall\,x^0 \in \mathbb{X}: \mathbb{P}^{x_0}(X_t \in \mathbb{X})=1$ for all $t \geq 0)$ and $\mu$ is the normalized Lebesgue measure. Moreover, the Koopman operator can be extended to a strongly continuous semigroup on the Hilbert space $L^2_\mu(\mathbb{X})$.
		\item[(b)] The probability measure is an invariant measure in the sense of Definition~\ref{def:invariant_measure}. 
	\end{itemize}
\end{assumption}
\noindent We briefly comment on this assumption and first note that forward invariance of $\mathbb{X}$ can be weakened, if one is only interested in estimates for states contained in $\mathbb{X}$, see also \textup{\cite[Section 3.2]{ZhanZuaz21}}. Moreover, if the dynamics obey an ODE, it was shown that the Koopman operator can indeed be extended to a strongly continuous semigroup on $L^2_\mu(\XX)$, see also~\cite{ZhanZuaz21}. Second, the assumption of invariance of the underlying probability measure is satisfied for a broad class of SDEs, see e.g. \cite{Risken1996}. It can be checked that $\mu$ is then invariant for $X_t$, that is, $\mathbb{P}(X_t \in A) = \mu(A)$ holds for all $A\in\Sigma_{\XX}$ and $t \geq 0$, provided $X_0$ is distributed according to~$\mu$.
Under Assumption~\ref{as:dyn} (b), Definition \ref{d:koopman} can be extended to the Lebesgue spaces $L^p_\mu(\XX)$, $1 \leq p < \infty$, i.e.\ the Banach spaces of all (equivalence classes of) measurable functions $f:\mathbb{X}\to\R$ with $\int_\mathbb{X}|f|^p \,\text{d}\mu < \infty$. Then, the Koopman operators $\mathcal{K}^t$ form a strongly continuous semigroup of contractions on all spaces $L^p_\mu(\XX)$, see \cite{BAKRY2013}. The functions in any of these spaces are often referred to as \textit{observables}.


Next, we recall the definition of the generator associated to the semigroup $\mathcal{K}_t$:
\begin{definition}[Koopman generator]
\label{d:koopman_generator}
The infinitesimal generator $\mathcal{L}$ is defined via
\begin{align}
\label{e:def_generator}
\mathcal{L}f := \lim_{t\to 0} \frac{(\mathcal{K}^t - \operatorname{Id})f}{t}
\end{align}
for all $f \in D(\mathcal{L})$, where $D(\mathcal{L})$ is the set of functions for which the limit~\eqref{e:def_generator} exists in the appropriate topology.
\end{definition}

\noindent For sufficiently smooth functions $f$, Ito's Lemma \cite{Oksendal2013} shows that the generator acts as a second order differential operator, defined in terms of the coefficients of \eqref{e:SDE}, i.e.
\begin{align}\label{e:repL}
\mathcal{L} = F \cdot \nabla  + \tfrac12 \sigma \sigma^\top : \nabla^2
\end{align}
with $A: B := \sum_{i,j}a_{i,j}b_{i,j}$ being the standard Frobenius inner product for matrices. In what follows, we will focus exclusively on the Koopman semigroup on the Hilbert space $L^2_\mu(\XX)$ with inner product $\langle f, g \rangle_\mu = \int_{\XX} f g \, \mathrm{d}\mu$. As the semigroup is strongly continuous on $L^2_\mu(\XX)$ by our assumptions, by standard semigroup theory, the domain $D(\mathcal{L})$ together with the graph norm forms a dense Banach space in $L^2_\mu(\XX)$.

%

\subsection{Extended Dynamic Mode Decomposition}
\label{subsec:edmd0}
\noindent
In this part we introduce the data-driven finite-dimensional approximation by eDMD of the Koopman generator defined in~\eqref{e:def_generator}, see, e.g., \cite{WILLIAMS2015,KLUS2016b,KLUS2018b}. To this end, for a fixed set of linearly independent observables $\psi_1,\ldots,\psi_N\in D(\calL)$, we consider the finite-dimensional subspace 
\begin{align*}
\mathbb{V} := \operatorname{span}\{\{\psi_j\}_{j=1}^N\} \subset D(\mathcal{L}).
\end{align*}
Let $P_\mathbb{V}$ denote the orthogonal projection onto $\mathbb{V}$. We define the Galerkin projection of the Koopman generator by %
	$\LV: ={P}_{\mathbb{V}} \mathcal{L}\restrict{\mathbb{V}}$. %
Note that this is not the restriction of $\mathcal{L}$ onto $\mathbb{V}$, as the image is also projected back onto $\mathbb{V}$. If $\mathbb{V}$ is an invariant set under the action of the generator, then $\LV= \mathcal{L}\restrict{\mathbb{V}}$ holds. %
\MS{As $\dim\mathbb{V} = N$, the linear operator~$\LV : \mathbb V\to\mathbb V$ may be represented by a matrix. In what follows, we denote the matrix representation of $\calL_{\mathbb V}$ in terms of the basis functions $\psi_1,\ldots,\psi_N$ by the same symbol $\calL_{\mathbb V}$ as the operator itself in a slight abuse of notation}. Thus, using~\cite{Klus2020}, we get %
\begin{align*}
	\LV = C^{-1}A
\end{align*}
with $C, A \in \mathbb{R}^{N\times N}$ defined by $C_{i,j}=\langle\psi_i,\psi_j\rangle_{\Lmu}$ and $A_{i,j} =\langle\psi_i,\mathcal{L}\psi_j\rangle_{\Lmu}$. %
The norm of the isomorphism from $\mathbb{V}$ to $\mathbb{R}^N$ depends on the smallest resp.\ largest eigenvalues of $C$, cf.\ Proposition~\ref{p:normeq} in Appendix~\ref{s:isomorphism}.

Consider data points $x_0, \ldots, x_{m-1} \in \mathbb{X}$. \MS{In the following, these data is either drawn from a trajectory of an ergodic system or sampled independent and identically distributed (i.i.d.). We state this as the following assumption, using the notation:
	$$
	L^2_{\mu,0}(\XX) := \{f \in L^2_\mu(\XX):\, \langle f, 1 \rangle_\mu = 0\}.
	$$
\begin{assumption}
	\label{as:data}
Let Assumption~\ref{as:dyn} hold and assume either of the following.
\begin{itemize}
	\item[(iid)] The data is drawn i.i.d.\ from the measure specified via Assumption~\ref{as:dyn}.
	\item[(erg)] Assumption~\ref{as:dyn}.(b) holds and the data is obtained as snapshots from a single ergodic trajectoy, that is, from a single long trajectory of the dynamics \eqref{e:SDE} with $x_0$ drawn from the unique invariant measure $\mu$. Further assume the Koopman semigroup is exponentially stable on $L^2_{\mu,0}(\XX)$, i.e. $\|\mathcal{K}^t \|_{L^2_{\mu, 0}(\XX)} \leq Me^{-\omega t}$ for some $M\geq 1$, $\omega > 0$. 
\end{itemize}
\end{assumption}}


\noindent %
Let us form the transformed data matrices
\begin{align*}
\Psi(X) &:= \left( \left. \left(\begin{smallmatrix}
\psi_1(x_0)\\
:\\
\psi_N(x_0)
\end{smallmatrix}\right)\right| \ldots \left| \left(\begin{smallmatrix}
\psi_1(x_{m-1})\\
:\\
\psi_N(x_{m-1})
\end{smallmatrix}\right)\right. \right)\\
\mathcal{L}\Psi(X) &:= \left( \left. \left(\begin{smallmatrix}
(\mathcal{L}\psi_1)(x_0)\\
:\\
(\mathcal{L}\psi_N)(x_0)
\end{smallmatrix}\right)\right| \ldots \left| \left(\begin{smallmatrix}
(\mathcal{L}\psi_1)(x_{m-1})\\
:\\
(\mathcal{L}\psi_N)(x_{m-1})
\end{smallmatrix}\right)\right. \right).
\end{align*}
The evaluation of $\mathcal{L}$ can be realized via the representation~\eqref{e:repL}.
The empirical estimator for the Galerkin projection $\LV$ is then given by 
\begin{align*}
\tL = \tC^{-1} \tA
\end{align*}
with $\tC = \tfrac{1}{m} \Psi(X) \Psi(X)^\top$, $\tA = \tfrac{1}{m} \Psi(X) \mathcal{L}\Psi(X)^\top \in \mathbb{R}^{N\times N}$. 
In all scenarios of \MS{of Assumption~\ref{as:data}}, we have with probability one that
\begin{enumerate}
\item [(1)] $\tL$ is well-defined for large enough $m$, that is, $\tC$ is invertible, and
\item [(2)] $\tL$ converges to $\LV$ for $m\rightarrow \infty$, see, e.g., \cite{KLUS2018b,Klus2020}.
\end{enumerate}
For the case of a long trajectory, this result follows from ergodic theory, which is concerned with the convergence of time averages to spatial averages as the data size grows to infinity~\cite{Beck1957}. Ergodic theory particularly applies to systems with a unique invariant measure.


\MS{\subsection{Error bounds on approximations of projected Koopman generator and operator}\label{ss:projgen}
\noindent
Next, we quantify the approximation quality of the data-driven finite-dimensional approximation of the Koopman generator, i.e., for a given linear space $\mathbb V$ of observables and data $x_0,\ldots,x_{m-1}\in\mathbb{X}$, we aim to estimate 
\begin{align*}
\| \LV - \tL\|_F = \|C^{-1}A - \tilde{C}_m^{-1}\tilde{A}_m\|_F.
\end{align*}

\subsubsection{Concentration bounds for random matrices}
We start by deriving entry-wise error bounds for the data-driven mass and stiffness matrix, respectively. Since most of the arguments are significantly simpler for i.i.d.~sampling, cf.\ Remark~\ref{rem:iid} at the end of this subsection, we first consider the more involved situation, i.e.\ ergodic sampling. This is of particular interest as simulation data of the dynamics~\eqref{e:SDE} can, then, be directly used. 

For $x \in \mathbb{X}$, consider a centered scalar random variable
\[ \phi: \, \mathbb{X} \mapsto \mathbb{R}, \quad \int_\mathbb{X} \phi(x) \,\mathrm{d}\mu(x) = 0. \]
We denote its variance with respect to the invariant measure by
\[ \sigma^2_\phi = \mathbb{E}^\mu[\phi^2] = \|\phi\|^2_{L^2_\mu}.\]
Moreover, we set $\phi_k = \phi(x_k)$ for given data points~$x_k$, $k \in \{0,1,\ldots,m-1\}$, and define the averaged random variable
\begin{align*}
\bar{\phi}_m &:= \frac{1}{m}\sum_{k= 0 }^{m-1} \phi_k.
\end{align*}
In Lemma~\ref{lemma:co-variances} below, we quantify the variance of the averaged random variable $\bar{\phi}_m$. The key point is the decomposition of the variance into an asymptotic contribution, independent of $m$, and a second contribution, which decays with an explicitly given (polynomial) dependence on the amount of data~$m$.
\begin{lemma}
\label{lemma:co-variances}
Let Assumption~\ref{as:data}.(erg) hold. Then we have
\begin{align}
\label{e:remainder}
\sigma^2_{\bar{\phi}_m} &= \frac{1}{m}\left[ \sigma_{\phi, \infty}^2 - R_\phi^m\right].
\end{align}
The asymptotic variance $\sigma_{\phi, \infty}^2$ and the remainder term $R_\phi^m$ are given by
\begin{align*}
\sigma_{\phi, \infty}^2 &= \sigma_{\phi}^2 + 2 \sum_{l=1}^\infty \langle \phi, \, \mathcal{K}^{l\Delta_t} \phi \rangle_\mu,
& R_\phi^m &= 2 \sum_{l=m}^\infty \langle \phi, \, \mathcal{K}^{l\Delta_t} \phi \rangle_\mu + \frac{2}{m} \sum_{l=1}^{m-1} l \langle \phi, \, \mathcal{K}^{l\Delta_t} \phi \rangle_\mu.
\end{align*}
\end{lemma}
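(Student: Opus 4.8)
The plan is to compute the variance $\sigma^2_{\bar\phi_m}=\mathbb{E}^\mu[\bar\phi_m^2]$ directly: expand the square, evaluate the pairwise covariances via the Markov property, and then reorganise the resulting finite double sum into the advertised asymptotic-plus-remainder form. The first step is the covariance kernel. Because $x_0$ is drawn from the invariant measure $\mu$ and the snapshots $x_k$ are sampled along a single trajectory of the time-homogeneous Markov process~\eqref{e:SDE} at the fixed increment $\Delta_t$, every $x_k$ is again distributed according to $\mu$; in particular $\mathbb{E}^\mu[\phi_k]=0$ and $\mathbb{E}^\mu[\phi_k^2]=\sigma_\phi^2$. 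For $0\le j\le k\le m-1$ I would condition on $x_j$ and use the Markov property together with Definition~\ref{d:koopman}, which gives $\mathbb{E}^\mu[\phi_k\mid x_j]=(\mathcal{K}^{(k-j)\Delta_t}\phi)(x_j)$; the tower property and invariance of $\mu$ then yield
\[
\mathbb{E}^\mu[\phi_j\phi_k]=\int_\mathbb{X}\phi\cdot\mathcal{K}^{(k-j)\Delta_t}\phi\,\mathrm{d}\mu=\langle\phi,\mathcal{K}^{(k-j)\Delta_t}\phi\rangle_\mu=:C_{k-j},
\]
with $C_0=\sigma_\phi^2$, so that the covariance depends only on the lag $l=|k-j|$.

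Next I substitute this into $\bar\phi_m^2=\tfrac1{m^2}\sum_{j,k}\phi_j\phi_k$ and group the $m^2$ summands by the lag $l=|j-k|$ — the diagonal $l=0$ occurring $m$ times and each $l\in\{1,\dots,m-1\}$ occurring $2(m-l)$ times — which produces
\[
\sigma^2_{\bar\phi_m}=\frac1{m^2}\Bigl(mC_0+2\sum_{l=1}^{m-1}(m-l)C_l\Bigr)=\frac1m\Bigl(C_0+2\sum_{l=1}^{m-1}C_l-\frac2m\sum_{l=1}^{m-1}lC_l\Bigr).
\]
To turn the truncated sum $\sum_{l=1}^{m-1}C_l$ into a full series minus its tail, I note that $\phi\in L^2_{\mu,0}(\mathbb{X})$ and that $\mathcal{K}^t$ leaves $L^2_{\mu,0}(\mathbb{X})$ invariant (by invariance of $\mu$), so Cauchy--Schwarz and the exponential-stability hypothesis of Assumption~\ref{as:data}.(erg) give $|C_l|\le\|\phi\|_\mu\|\mathcal{K}^{l\Delta_t}\phi\|_\mu\le Me^{-\omega l\Delta_t}\sigma_\phi^2$, whence $\sum_{l\ge1}C_l$ converges absolutely and $\sum_{l=1}^{m-1}C_l=\sum_{l=1}^\infty C_l-\sum_{l=m}^\infty C_l$. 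Inserting this into the previous display gives
\[
\sigma^2_{\bar\phi_m}=\frac1m\Bigl[\bigl(C_0+2\sum_{l=1}^\infty C_l\bigr)-\bigl(2\sum_{l=m}^\infty C_l+\frac2m\sum_{l=1}^{m-1}lC_l\bigr)\Bigr]=\frac1m\bigl[\sigma_{\phi,\infty}^2-R_\phi^m\bigr],
\]
which is precisely~\eqref{e:remainder}; the bound on $|C_l|$ moreover shows that both contributions to $R_\phi^m$ vanish as $m\to\infty$, consistent with the stated interpretation.

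The computation is elementary, and the only genuinely substantive points are the use of the time-homogeneous Markov property of the SDE snapshots to collapse each covariance onto a single power of the Koopman semigroup, and the absolute convergence of $\sum_l C_l$ — the one place where the exponential-stability assumption enters, and which also legitimises the splitting of the truncated sum into an infinite series minus a tail. Beyond this bookkeeping I do not anticipate any real obstacle; the i.i.d.\ case (alluded to in Remark~\ref{rem:iid}) will be the special situation where $C_l=0$ for $l\ge1$, so that $\sigma^2_{\bar\phi_m}=\sigma_\phi^2/m$ directly.
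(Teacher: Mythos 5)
Your proof is correct and follows essentially the same route as the paper's: expand $\mathbb{E}^\mu[\bar\phi_m^2]$, use stationarity and the Markov property to reduce each covariance to $\langle\phi,\mathcal{K}^{l\Delta_t}\phi\rangle_\mu$, collect terms by lag to get the factor $(m-l)$, and then add and subtract the tail $2\sum_{l=m}^\infty\langle\phi,\mathcal{K}^{l\Delta_t}\phi\rangle_\mu$. If anything, you are slightly more careful than the paper, which leaves implicit both the identification $\mathbb{E}^\mu[\phi_0\phi_l]=\langle\phi,\mathcal{K}^{l\Delta_t}\phi\rangle_\mu$ and the absolute convergence of the infinite series (needed to legitimise the add-and-subtract step) that you derive from the exponential-stability hypothesis.
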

\begin{proof}
We repeat the proof given in \cite[Section 3.1.2]{Lelievre:2016aa} for the sake of illustration:
\begin{align}
\nonumber \sigma^2_{\bar{\phi}_m} &= \frac{1}{m^2} \sum_{k, l = 0}^{m-1} \mathbb{E}^\mu\left[ \phi_k \, \phi_l \right]  
= \frac{1}{m} \sigma^2_\phi+ \frac{2}{m^2} \sum_{k=0}^{m-1} \sum_{l=k+1}^{m-1} \mathbb{E}^\mu\left[ \phi_k\, \phi_l \right] \\
\nonumber &= \frac{1}{m} \left[\sigma^2_\phi + \frac{2}{m} \sum_{k=0}^{m-1} \sum_{l=k+1}^{m-1} \mathbb{E}^\mu\left[ \phi_0\, \phi_{l-k} \right] \right]
= \frac{1}{m} \left[\sigma^2_\phi + \frac{2}{m} \sum_{k=0}^{m-1}\sum_{l=1}^{m-k-1} \mathbb{E}^\mu \left[ \phi_0  \, \phi_{l} \right] \right] \\
\nonumber &= \frac{1}{m} \left[\sigma^2_\phi + \frac{2}{m}\sum_{l=1}^{m-1} (m -l)  \mathbb{E}^\mu \left[ \phi_0  \, \phi_{l} \right] \right]
= \frac{1}{m} \left[\sigma^2_\phi + 2\sum_{l=1}^{m-1} (1 - \tfrac{l}{m}) \langle\phi,\, \mathcal{K}^{l\Delta_t} \phi \rangle_\mu \right].
\end{align}
The result follows by adding and subtracting the term $2 \sum_{l=m}^\infty \langle \phi, \, \mathcal{K}^{l\Delta_t} \phi \rangle_\mu$.
\end{proof}
\begin{remark}
The assumption of exponential stability is satisfied, for example, if the generator $\mathcal{L}$ is self-adjoint \braces{also known as detailed balance or reversibility} and additionally
satisfies a Poincar\'e or spectral gap inequality {\textup{\cite{Lelievre:2016aa}}}. The requirement $\langle f, 1 \rangle_\mu = 0$ is necessary, as the constant function is invariant for $\mathcal{K}^t$.
\end{remark}

\begin{remark}
The proof of Lemma~\rmref{lemma:co-variances} shows that $\sigma^2_{\phi, \infty} = \lim_{m \rightarrow \infty} \sigma^2_{\bar{\phi}_m} \geq 0$, hence it can indeed by interpreted as a variance. 

For reversible systems, we have $\langle \phi,\, \mathcal{K}^{l \Delta_t} \phi \rangle_\mu \geq 0$ by symmetry of the Koopman operator. Therefore, $\sigma^2_{\phi, \infty} \geq \sigma^2_\phi > 0$ is guaranteed in this case, and the variance $\sigma^2_{\bar{\phi}_m}$ approaches $\frac{1}{m}\sigma^2_{\phi, \infty}$ from below.
\end{remark}

\noindent
Next, we derive an estimate for the remainder term in terms of the number~$m$ of data points.

\begin{lemma}
\label{lemma:bound_remainder}
Let Assumption~\ref{as:data}.(erg) hold, and set $q = e^{-\omega\Delta_t} < 1$. Then
\[ |R_\phi^m | \leq \frac{2\sigma^2_{\phi}}{m}\frac{q}{(1-q)^2}. \]
\end{lemma}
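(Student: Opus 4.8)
The plan is to start from the explicit expression for the remainder supplied by Lemma~\ref{lemma:co-variances},
\[
R_\phi^m = 2\sum_{l=m}^\infty \langle\phi,\mathcal{K}^{l\Delta_t}\phi\rangle_\mu + \frac{2}{m}\sum_{l=1}^{m-1} l\,\langle\phi,\mathcal{K}^{l\Delta_t}\phi\rangle_\mu ,
\]
and to control each correlation coefficient $\langle\phi,\mathcal{K}^{l\Delta_t}\phi\rangle_\mu$ by combining the Cauchy--Schwarz inequality with the exponential decay of the Koopman semigroup. First I would note that, since $\phi$ is centered, $\int_\mathbb{X}\phi\,\mathrm{d}\mu = 0$, invariance of $\mu$ (Definition~\ref{def:invariant_measure}) gives $\int_\mathbb{X}\mathcal{K}^{l\Delta_t}\phi\,\mathrm{d}\mu = 0$ as well, so $\mathcal{K}^{l\Delta_t}\phi \in L^2_{\mu,0}(\mathbb{X})$ and the exponential stability estimate of Assumption~\ref{as:data}.(erg) applies. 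This yields $\|\mathcal{K}^{l\Delta_t}\phi\|_\mu \leq q^l\|\phi\|_\mu = q^l\sigma_\phi$ (with $q=e^{-\omega\Delta_t}$; the constant $M$ is $1$ in the reversible/Poincar\'e setting of the preceding remark, and otherwise can be absorbed by using $\|\mathcal{K}^{l\Delta_t}\|_{L^2_{\mu,0}}\le q^l$ with $q:=\|\mathcal{K}^{\Delta_t}\|_{L^2_{\mu,0}}<1$ via the semigroup property). Hence $|\langle\phi,\mathcal{K}^{l\Delta_t}\phi\rangle_\mu| \leq \sigma_\phi^2\,q^l$ for all $l\ge 1$.

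The key step is then a bookkeeping trick that merges the two sums: for every $l \geq m$ one has $1 \leq l/m$, so
\[
\Bigl|\,2\sum_{l=m}^\infty \langle\phi,\mathcal{K}^{l\Delta_t}\phi\rangle_\mu\,\Bigr| \leq 2\sum_{l=m}^\infty |\langle\phi,\mathcal{K}^{l\Delta_t}\phi\rangle_\mu| \leq \frac{2}{m}\sum_{l=m}^\infty l\,|\langle\phi,\mathcal{K}^{l\Delta_t}\phi\rangle_\mu| .
\]
Combining this with the triangle-inequality bound for the finite sum gives
\[
|R_\phi^m| \leq \frac{2}{m}\sum_{l=1}^\infty l\,|\langle\phi,\mathcal{K}^{l\Delta_t}\phi\rangle_\mu| \leq \frac{2\sigma_\phi^2}{m}\sum_{l=1}^\infty l\,q^l ,
\]
and it only remains to insert the elementary identity $\sum_{l=1}^\infty l q^l = q/(1-q)^2$ (valid for $0<q<1$, obtained by differentiating the geometric series) to conclude.

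I do not expect a serious obstacle: the argument is short once the geometric decay of the autocorrelations is available. The only points requiring minor care are (i) verifying that centeredness of $\phi$ is preserved under $\mathcal{K}^{l\Delta_t}$, so that the decay estimate on $L^2_{\mu,0}(\mathbb{X})$ is legitimately applicable, and (ii) the bound $1\le l/m$ for $l\ge m$ that collapses the two contributions into the single series $\frac1m\sum_{l\ge1} l|\langle\phi,\mathcal{K}^{l\Delta_t}\phi\rangle_\mu|$ — this is what makes the $\tfrac1m$ rate come out cleanly rather than a weaker mixture of $q^m$ and $\tfrac1m$ terms.
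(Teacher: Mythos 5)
Your proof is correct and follows essentially the same route as the paper: the Cauchy--Schwarz inequality combined with exponential stability on $L^2_{\mu,0}(\mathbb{X})$ to obtain $|\langle\phi,\mathcal{K}^{l\Delta_t}\phi\rangle_\mu|\le \sigma_\phi^2 q^l$, followed by summing the resulting geometric-type series. The only (harmless) difference is the bookkeeping at the end --- you absorb the tail sum into the weighted sum via $1\le l/m$ and invoke $\sum_{l\ge1} l q^l = q/(1-q)^2$ once, whereas the paper evaluates both sums in closed form and simplifies to the slightly sharper intermediate bound $\tfrac{2\sigma_\phi^2}{m}\tfrac{q(1-q^m)}{(1-q)^2}$ before discarding the factor $1-q^m$; your remark about the constant $M$ is a fair point, since the paper itself silently takes $M=1$ in this step.
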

\begin{proof}
We first observe that by the Cauchy Schwarz inequality
\begin{align*}
|\langle \phi, \mathcal{K}^{l\Delta_t} \phi \rangle_\mu |
&\leq \|\mathcal{K}^{l \Delta_t} \|_{L(L^2_{\mu, 0}(\mathbb{X}), L^2_{\mu}(\mathbb{X}))} \|\phi\|^2_{L^2_{\mu}(\mathbb{X})} \leq e^{-\omega\Delta_t l} \sigma^2_\phi,
\end{align*}
and therefore:
\begin{align*}
|R_\phi^m | &\leq 2\sigma^2_{\phi} \left[ \sum_{l=m}^\infty e^{-\omega\Delta_t l} + \frac{1}{m} \sum_{l=1}^{m-1} l e^{-\omega\Delta_t l} \right] \\
&= 2\sigma^2_{\phi} \left[ \frac{q^m}{1-q} + \frac{1}{m} \frac{(m-1)q^{m+1} - mq^m + q }{(1-q)^2} \right] \\
&= \frac{2\sigma^2_{\phi}}{m}\frac{q(1 - q^m)}{(1-q)^2} \leq \frac{2\sigma^2_{\phi}}{m}\frac{q}{(1-q)^2} .
\end{align*}
In the second line, we have used the geometric series for the first term, and a similar identity for the sum $\sum_{l=1}^\infty l q^l, \,q < 1$. The third line is obtained by direct simplification.
\end{proof}

\noindent We can now combine the results of Lemmas~\ref{lemma:co-variances} and~\ref{lemma:bound_remainder} in order to obtain a concentration bound for a centered, matrix-valued random variable. To this end, we consider an $N\times N$ random matrix $\Phi$ with all entries $\phi_{ij} \in L^2_{\mu, 0}$ centered. We define $\Phi_k$ and $\overline{\Phi}_m$ as for the scalar case, i.e., $\Phi_k = \Phi(x_k)$ and $\overline{\Phi}_m = \tfrac{1}{m}\sum_{k=0}^{m-1}\Phi_k$. 

\begin{proposition}\label{thm:prob_error_bounds_xi}
Let Assumption~\ref{as:data}.(erg) hold,, set $q = e^{-\omega\Delta_t}$, and assume $\sigma^2_{\phi_{ij}, \infty} > 0$ for all $(i, j)$. Let $\Phi \in \mathbb{R}^{N \times N}$ be a centered, matrix-valued random variable in $L^2_\mu$. Denote the matrices of all entry-wise variances and asymptotic variances by
\begin{align*}
\Sigma_{\Phi} &= \left(\sigma_{\phi_{ij}} \right)_{i,j=1}^N, & \Sigma_{\Phi, \infty} &= \left(\sigma_{\phi_{ij}, \infty} \right)_{i,j=1}^N
\end{align*}
 Then, for any given $\delta > 0$, and $m \in \mathbb{N}$, we have with probability at least $1 - \delta$ that
\begin{align}\label{eq:ineq_m}
\|\overline{\Phi}_m\|_F \leq \frac{N}{\sqrt{m \delta}} \left[ \|\Sigma_{\Phi,\infty}\|^2_F + \frac{2q}{m (1- q)^2} \|\Sigma_{\Phi} \|^2_F \right]^{1/2}.
\end{align}
For reversible systems, we obtain the simplified bound
\begin{align}
\label{eq:ineq_m_rev}
\|\overline{\Phi}_m\|_F \leq \frac{N}{\sqrt{m \delta}} \|\Sigma_{\Phi,\infty}\|_F.
\end{align}
\end{proposition}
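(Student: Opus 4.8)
The plan is to transfer the scalar variance estimates of Lemmas~\ref{lemma:co-variances} and~\ref{lemma:bound_remainder} to the matrix $\overline{\Phi}_m$ entry by entry, and then to combine the $N^2$ resulting tail bounds by a union bound. First I would fix a pair $(i,j)$ and consider the scalar centered observable $\phi_{ij}\in L^2_{\mu,0}(\mathbb{X})$ together with its time average $\overline{\phi}_{ij,m}=\tfrac1m\sum_{k=0}^{m-1}\phi_{ij}(x_k)$. Under Assumption~\ref{as:data}.(erg) the snapshot process is stationary — since $x_0$ is drawn from the invariant measure $\mu$, every $x_k$ is again distributed according to $\mu$ — so $\mathbb{E}^\mu[\phi_{ij}(x_k)]=0$ for all $k$, whence $\overline{\phi}_{ij,m}$ is centered and $\mathbb{E}^\mu[(\overline{\phi}_{ij,m})^2]=\sigma^2_{\overline{\phi}_{ij},m}$. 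Lemma~\ref{lemma:co-variances} then gives $\sigma^2_{\overline{\phi}_{ij},m}=\tfrac1m\big[\sigma^2_{\phi_{ij},\infty}-R^m_{\phi_{ij}}\big]$, and Lemma~\ref{lemma:bound_remainder} bounds $|R^m_{\phi_{ij}}|$, so that
\[
\sigma^2_{\overline{\phi}_{ij},m}\;\le\;\frac1m\left[\sigma^2_{\phi_{ij},\infty}+\frac{2q}{m(1-q)^2}\,\sigma^2_{\phi_{ij}}\right]=:\frac{b_{ij}}{m}.
\]

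Next I would apply Chebyshev's inequality entry-wise: for $s_{ij}>0$, $\mathbb{P}(|\overline{\phi}_{ij,m}|\ge s_{ij})\le \sigma^2_{\overline{\phi}_{ij},m}/s_{ij}^2\le b_{ij}/(m s_{ij}^2)$. Choosing $s_{ij}^2=N^2 b_{ij}/(m\delta)$ makes each right-hand side at most $\delta/N^2$, so a union bound over the $N^2$ pairs shows that, with probability at least $1-\delta$, the inequality $|\overline{\phi}_{ij,m}|^2\le N^2 b_{ij}/(m\delta)$ holds simultaneously for all $(i,j)$. Summing over $i,j$ and using $\sum_{i,j}\sigma^2_{\phi_{ij},\infty}=\|\Sigma_{\Phi,\infty}\|_F^2$ and $\sum_{i,j}\sigma^2_{\phi_{ij}}=\|\Sigma_{\Phi}\|_F^2$ then gives
\[
\|\overline{\Phi}_m\|_F^2=\sum_{i,j=1}^N|\overline{\phi}_{ij,m}|^2\;\le\;\frac{N^2}{m\delta}\left[\|\Sigma_{\Phi,\infty}\|_F^2+\frac{2q}{m(1-q)^2}\|\Sigma_{\Phi}\|_F^2\right],
\]
and taking square roots yields \eqref{eq:ineq_m}. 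For a reversible system I would instead invoke the fact recalled in the remark above that $\langle\phi_{ij},\mathcal{K}^{l\Delta_t}\phi_{ij}\rangle_\mu\ge 0$, so that $R^m_{\phi_{ij}}\ge 0$ and Lemma~\ref{lemma:co-variances} directly gives $\sigma^2_{\overline{\phi}_{ij},m}\le\tfrac1m\sigma^2_{\phi_{ij},\infty}$; repeating the argument with $b_{ij}$ replaced by $\sigma^2_{\phi_{ij},\infty}$ produces \eqref{eq:ineq_m_rev}.

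The step most likely to require care is the reduction to the scalar setting: one must check that ergodic sampling renders the snapshot process stationary, so that each $\overline{\phi}_{ij,m}$ is genuinely centered and Lemmas~\ref{lemma:co-variances} and~\ref{lemma:bound_remainder} apply verbatim to every entry. A second, purely quantitative, choice is how to aggregate the $N^2$ entry-wise tail bounds: the union bound used above is exactly what produces the prefactor $N$ in \eqref{eq:ineq_m}; applying Markov's inequality directly to the nonnegative random variable $\|\overline{\Phi}_m\|_F^2$ — whose expectation is again controlled by Lemmas~\ref{lemma:co-variances}--\ref{lemma:bound_remainder} — would remove this factor, at the price of an in-probability rather than entry-wise statement. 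Beyond these points the argument is routine bookkeeping, and I do not expect a genuine obstacle.
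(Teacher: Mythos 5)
Your proposal is correct and follows essentially the same route as the paper's own proof: entry-wise Chebyshev bounds built from Lemmas~\ref{lemma:co-variances} and~\ref{lemma:bound_remainder} with threshold chosen so each entry fails with probability at most $\delta/N^2$, a union bound (the paper invokes Lemma~\ref{lem:probabilities} for the intersection), and summation over entries to pass to the Frobenius norm, with the reversible case handled via $R^m_{\phi_{ij}}\ge 0$ exactly as you describe. Your closing observation that applying Markov directly to $\|\overline{\Phi}_m\|_F^2$ would shave the factor $N$ is a sensible aside but not part of the paper's argument.
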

\begin{proof}
Noting that $[\overline{\Phi}_m]_{ij} = [\ol{\phi_{ij}}]_m$, the scalar Chebyshev inequality and the result of Lemma~\ref{lemma:co-variances}, yield for all $(i, j)$ :
\begin{align*}
\mathbb{P}\left([\overline{\Phi}_m]_{ij}^2 \leq \varepsilon^2 \right)
&\geq 1 - \frac{\sigma^2_{[\ol{\phi_{ij}}]_m}}{\varepsilon^2} = 1 - \frac{\frac{1}{m}[\sigma^2_{\phi_{ij},\infty} - R_{\phi_{ij}}^m]}{\varepsilon^2}\\
&\geq 1 - \frac{1}{m \varepsilon^2}\left[\sigma^2_{\phi_{ij},\infty} + \frac{2 \sigma^2_{\phi_{ij}} q}{m (1- q)^2}\right].
\end{align*}
The second term on the right hand side does not exceed $\frac{\delta}{N^2}$ if 
\begin{equation*}
\veps^2 \geq \frac{N^2}{m \delta}\left[\sigma^2_{\phi_{ij},\infty} + \frac{2 \sigma^2_{\phi_{ij}} q}{m (1- q)^2}\right],
\end{equation*}
in other words, there is a set of trajectories of probability at least $1 - \frac{\delta}{N^2}$ such that
\begin{equation*}
[\overline{\Phi}_m]_{ij}^2 \leq \frac{N^2}{m \delta}\left[\sigma^2_{\phi_{ij},\infty} + \frac{2 \sigma^2_{\phi_{ij}} q}{m (1- q)^2}\right].
\end{equation*}
On the intersection of these sets, we have that
\begin{align*}
\| \overline{\Phi}_m\|_F \leq \frac{N}{\sqrt{m \delta}} \left[ \|\Sigma_{\Phi,\infty}\|^2_F + \frac{2q}{m (1- q)^2} \|\Sigma_{\Phi} \|^2_F \right]^{1/2},
\end{align*}
and the probability of the intersection is at least $1 - \delta$ by Lemma~\ref{lem:probabilities}. In the reversible case, we know that $R_{\phi_{ij}}^m \geq 0$ for all $(i, j)$, and therefore
\begin{align}
\label{eq:error_phi_ij_rev}
\mathbb{P}\left([\overline{\Phi}_m]_{ij}^2 \leq \varepsilon^2 \right) &\geq 1 - \frac{1}{m \varepsilon^2}\sigma^2_{\phi_{ij},\infty}.
\end{align}
The simplified bound~\eqref{eq:ineq_m_rev} follows by repeating the above argument starting from this inequality.
\end{proof}
\begin{remark}[I.i.d.\ sampling]
	\label{rem:iid}
If the data are sampled i.i.d., that is, Assumption~\ref{as:data}.(iid) hold instead of Assumption~\ref{as:data}.(erg), then by standard results, one has $\sigma^2_{\bar{\phi}_m} = \frac{1}{m}\sigma^2_\phi$. The bounds from Proposition~\ref{thm:prob_error_bounds_xi} simplify  significantly in this case. By the Chebyshev inequality:
\begin{align*}
\mathbb{P}\left([\overline{\Phi}_m]_{ij}^2 \leq \varepsilon^2 \right)
&\geq 1 - \frac{\frac{1}{m} \sigma^2_{\phi_{ij}}}{\varepsilon^2},
\end{align*}
which leads to the following error estimate for fixed $m \in \mathbb{N}$ and $\delta > 0$:
\begin{align}
\label{eq:ineq_m_iid}
\|\overline{\Phi}_m\|_F \leq \frac{N}{\sqrt{m \delta}} \|\Sigma_{\Phi} \|_F.
\end{align}
The setting of sampling via the Lebesgue measure on a compact set $\mathbb{X}$ was thoroughly considered in \cite{ZhanZuaz21}.
\end{remark}
\subsubsection{Error bound for the projected generator}
Next, we deduce our first main result by applying the probabilistic bounds obtained in Proposition~\ref{thm:prob_error_bounds_xi} to estimate the error for the data-driven Galerkin projection $\tL$.
\begin{theorem}[Approximation error: probabilistic bound]
\label{t:generatorestimate}
Let Assumption~\ref{as:data} hold. Then, for any error bound $\tilde\varepsilon > 0$ and probabilistic tolerance~$\tilde\delta \in (0,1)$, we have 
\begin{align}\label{e:prob_est}
\mathbb{P}\left(\|\LV- \tL\|_F\leq \tilde\varepsilon\right) \geq 1-\tilde\delta
\end{align}
for any amount~$m \in \mathbb{N}$ of data points such that the following hold with $$\varepsilon = \min\left\{1,\frac{1}{\|A\|\|C^{-1}\|}\right\}\cdot\frac{\|A\|\tilde\veps}{2\|A\|\|C^{-1}\| + \tilde\veps}\quad \text{and}\quad \delta = \frac{\tilde\delta}3.$$
\begin{itemize}
\item In case of ergodic sampling, i.e., Assumption~\ref{as:data}.(erg),
\begin{align*}
m \geq  \frac{N^2}{\delta\varepsilon^2} \left[ \|\Sigma_{\Phi,\infty}\|^2_F + \frac{2q}{m (1- q)^2} \|\Sigma_{\Phi} \|^2_F \right]
\end{align*}
\item In case of ergodic sampling, i.e., Assumption~\ref{as:data}.(erg), of a reversible system
\begin{align*}
m \geq  \frac{N^2}{\delta\varepsilon^2} \|\Sigma_{\Phi,\infty}\|^2_F.
\end{align*}
\item In case of i.i.d.\ sampling, i.e., Assumption~\ref{as:data}.(iid),
\begin{align*}
m \geq \frac{N^2}{\delta\varepsilon^2} \|\Sigma_{\Phi} \|^2_F.
\end{align*}
\end{itemize}
%
\end{theorem}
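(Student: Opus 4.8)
The plan is to reduce the statement to a deterministic perturbation estimate for the rational map $(C,A)\mapsto C^{-1}A$, fed with the concentration bounds already supplied by Proposition~\ref{thm:prob_error_bounds_xi}; here and below $\|\cdot\|$ denotes the spectral norm and $\|\cdot\|_F$ the Frobenius norm.

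First I would identify the empirical matrices as averages of matrix-valued observables: setting
$\Phi^C(x) := \bigl(\psi_i(x)\psi_j(x) - C_{ij}\bigr)_{i,j}$ and $\Phi^A(x) := \bigl(\psi_i(x)(\mathcal L\psi_j)(x) - A_{ij}\bigr)_{i,j}$, both are centred under $\mu$ (by definition of $C_{ij}=\langle\psi_i,\psi_j\rangle_\mu$ and $A_{ij}=\langle\psi_i,\mathcal L\psi_j\rangle_\mu$), lie in $L^2_\mu$ once one assumes $\psi_i\psi_j,\,\psi_i\mathcal L\psi_j\in L^2_\mu$, have nonvanishing asymptotic variances since the $\psi_j$ are linearly independent, and satisfy $\tC - C = \overline{\Phi^C}_m$ and $\tA - A = \overline{\Phi^A}_m$ in the notation preceding Proposition~\ref{thm:prob_error_bounds_xi}. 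Applying that proposition to $\Phi^C$ and to $\Phi^A$ in the ergodic case --- inequality \eqref{eq:ineq_m}, or \eqref{eq:ineq_m_rev} for reversible systems --- and Remark~\ref{rem:iid}, i.e.\ \eqref{eq:ineq_m_iid}, in the i.i.d.\ case, the stated lower bound on $m$ (an inequality implicit in $m$ in the ergodic case) forces $\|\tC - C\|_F\le\varepsilon$ with probability $\ge 1-\delta$ and, likewise, $\|\tA - A\|_F\le\varepsilon$ with probability $\ge1-\delta$, where $\Sigma_\Phi,\Sigma_{\Phi,\infty}$ are read as entrywise majorants covering both $\Phi^C$ and $\Phi^A$. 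A union bound places both events on a single event $\Omega_0$ of probability at least $1-\tilde\delta$; the choice $\delta=\tilde\delta/3$ is more than enough (a factor of two already suffices, the surplus harmlessly absorbing the --- in fact deterministic --- invertibility of $\tC$).

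On $\Omega_0$ I would then run the deterministic estimate from the resolvent-type identity
\begin{align*}
C^{-1}A - \tC^{-1}\tA \;=\; C^{-1}(\tC - C)\,\tC^{-1}A \;+\; \tC^{-1}(A - \tA),
\end{align*}
controlling $\|\tC^{-1}\|$ via a Neumann series: since the choice of $\varepsilon$ guarantees $\|C^{-1}\|\varepsilon\le\tfrac{\tilde\varepsilon}{2\|A\|\|C^{-1}\|+\tilde\varepsilon}<1$, the matrix $\tC$ is invertible with $\|\tC^{-1}\|\le\|C^{-1}\|/(1-\|C^{-1}\|\varepsilon)$. Taking Frobenius norms and using $\|BXD\|_F\le\|B\|\,\|X\|_F\,\|D\|$ yields
\begin{align*}
\|\LV-\tL\|_F \;\le\; \frac{\|C^{-1}\|\,\varepsilon\,\bigl(\|A\|\|C^{-1}\|+1\bigr)}{1-\|C^{-1}\|\varepsilon}.
\end{align*}
Substituting the explicit $\varepsilon$ and distinguishing the two regimes $\|A\|\|C^{-1}\|\le1$ and $\|A\|\|C^{-1}\|>1$ --- which is exactly what the factor $\min\{1,(\|A\|\|C^{-1}\|)^{-1}\}$ in $\varepsilon$ is designed for --- a short algebraic check shows the right-hand side is $\le\tilde\varepsilon$. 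Combined with $\mathbb P(\Omega_0)\ge1-\tilde\delta$, this gives \eqref{e:prob_est}.

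The only genuinely delicate points, everything else being bookkeeping, are: (i) controlling $\|\tC^{-1}\|$ uniformly on the good event, for which one needs $\varepsilon$ small enough that the Neumann series converges; and (ii) reverse-engineering $\varepsilon$ from $\tilde\varepsilon$ so that the perturbation bound closes to exactly $\tilde\varepsilon$ --- the stated formula for $\varepsilon$, including the minimum and the denominator $2\|A\|\|C^{-1}\|+\tilde\varepsilon$, is precisely the output of solving that inequality in the two regimes. The probabilistic content is entirely inherited from Proposition~\ref{thm:prob_error_bounds_xi} and Remark~\ref{rem:iid}; the passage from entrywise concentration of $C$ and $A$ to concentration of $C^{-1}A$ is purely linear-algebraic.
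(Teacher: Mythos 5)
Your proposal is correct and follows essentially the same route as the paper's proof: apply Proposition~\ref{thm:prob_error_bounds_xi} to the centered matrix observables $\Phi_C,\Phi_A$, control $\|\tC^{-1}\|$ via a Neumann-series/resolvent perturbation bound on the good event, and close the algebra with the decomposition $C^{-1}A-\tC^{-1}\tA=C^{-1}(\tC-C)\tC^{-1}A+\tC^{-1}(A-\tA)$, which is identical to the paper's $(C^{-1}-\tC^{-1})A+\tC^{-1}(A-\tA)$. Your only deviations are cosmetic --- a symmetric threshold $\varepsilon$ for both matrices and a direct union bound in place of the paper's nested three-way split (and your correct observation that $\delta=\tilde\delta/2$ would suffice); the aside that invertibility of $\tC$ is ``deterministic'' is inaccurate in general but harmless, since your Neumann-series argument establishes it on the good event where it is needed.
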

\begin{proof}
In this proof, we will omit the subscript for the norm and set $\|\cdot\| = \|\cdot\|_F$. Let us introduce the centered matrix-valued random variables
\begin{align*}
\Phi_C(x) := \Psi(x)\Psi(x)^\top - C
\qquad\text{and}\qquad
\Phi_A(x) := \Psi(x)\calL\Psi(x)^\top - A,
\end{align*}
where $\Psi = [\psi_1,\ldots,\psi_N]^\top$. Then $\wt C_m - C = [\ol{\Phi_C}]_m$ and $\wt A_m - A = [\ol{\Phi_A}]_m$. Hence, we may apply Proposition~\ref{thm:prob_error_bounds_xi} to these matrix-valued random variables. 
First, by the choice of $m$ above we have
\begin{align*}
\mathbb{P}\left( \|C-\wt C_m\|\le \frac{R}{\|A\|\|C^{-1}\|}\right) \geq 1-\tfrac{\tilde\delta}{3}
\qquad\text{and}\qquad
\mathbb{P}\left(\|\tA-A\|\leq R\right) \geq 1-\tfrac{\tilde\delta}{3},
\end{align*}
where
$$
R := \frac{\|A\|\tilde\veps}{2\|A\|\|C^{-1}\| + \tilde\veps} = \frac{\tilde\veps}{2\left(\|C^{-1}\| + \frac{\tilde\veps}{2\|A\|}\right)}.
$$
Moreover, we compute
\begin{align*}
\|\tC^{-1}-C^{-1}\|
&= \|\tC^{-1}(C - \tC)C^{-1}\|\leq \|C^{-1}\|\|C-\tC\| \left(\|\tC^{-1}- C^{-1}\| + \|C^{-1}\|\right)
\end{align*}
which implies
$$
\|\tC^{-1}-C^{-1}\| \leq \frac{\|C^{-1}\|^2\|C-\tilde{C}_m\|}{1-\|C^{-1}\|\|C-\tilde{C}_m\|}.
$$ 
Hence, by straightforward computations we obtain
\begin{align*}
\mathbb{P}\left(\|\tC^{-1}-C^{-1}\| \leq \tfrac{\tilde\veps}{2\|A\|}\right)
&\geq \mathbb{P}\left(\frac{\|C^{-1}\|^2\|C-\tilde{C}_m\|}{1-\|C^{-1}\|\|C-\tilde{C}_m\|} \leq \tfrac{\tilde\veps}{2\|A\|}\right)\\
&=\mathbb{P}\left(\|C-\wt C_m\|\le \frac{R}{\|A\|\|C^{-1}\|}\right)  \geq 1-\tfrac{\tilde\delta}{3}.
\end{align*}
and
\begin{align*}
\mathbb{P}\left(\|\tA-A\|\leq \tfrac{\tilde\veps}{2\|\tC^{-1}\|}\right)
&\geq \mathbb{P}\left(\|\tA-A\|\leq \tfrac{\tilde\veps}{2\left(\|C^{-1}\|+ \|\tC^{-1}-C^{-1}\| \right)}\right)\\
&\geq \mathbb{P}\left(\|\tA-A\|\leq \tfrac{\tilde\veps}{2\left(\|C^{-1}\|+\tfrac{\tilde\veps}{2\|A\|}\right)} \;\wedge\; \|\tC^{-1}-C^{-1}\| \leq \tfrac{\tilde\veps}{2\|A\|}  \right) \\
&\geq \left(1-\tfrac{\tilde\delta}{3}\right) + \left(1-\tfrac{\tilde\delta}{3}\right) - 1 = 1-\tfrac{2\tilde\delta}{3}.
\end{align*}
Thus, we conclude
\begin{align*}
\mathbb{P}(\|C^{-1}A -\tC^{-1}\tA\|\leq \tilde\veps)
&= \mathbb{P}\left(\|\tC^{-1}(A-\tA) + \left(C^{-1}-\tC^{-1}\right) A\|\leq \tilde\veps\right) \\
&\geq \mathbb{P}\left(\|\tC^{-1}\|\|A-\tA\| + \|C^{-1}-\tC^{-1}\|\|A\|\leq \tilde\veps\right)\\
&\geq \mathbb{P}\left(\|A-\tA\| \leq \tfrac{\tilde\veps}{2\|\tC^{-1}\|} \wedge \|C^{-1}-\tC^{-1}\|\leq \tfrac{\tilde\veps}{2\|A\|}\right)\\
&\geq (1-\tfrac{2\tilde\delta}{3}) + (1-\tfrac{\tilde\delta}{3}) - 1 = 1-\tilde\delta,
\end{align*}
which is \eqref{e:prob_est}.
\end{proof}
\noindent A similar result as Theorem~\ref{t:generatorestimate} was obtained for ODE systems in \cite{ZhanZuaz21} under the assumption that the data is drawn i.i.d.

An immediate consequence of the estimate on the generator approximation error is a bound on the error of the trajectories. To this end, consider the systems
\begin{align}
\label{e:z}
&&\dot{z} &= \LV z  &&z(0)=z_0\\
\label{e:tz}
&&\dot{\tilde{z}} &= \tL \tilde z  &&\tilde{z}(0)={z}_0.
\end{align}
where $z_0\in \mathbb{R}^n$, which represents an ODE in terms of the coefficients in the basis representation of elements of $\mathbb{V}$. We will leverage the error bound obtained in Theorem \ref{t:generatorestimate} to derive an estimate on the resulting prediction error in the observables, i.e., $\|z(t)-\tilde{z}(t)\|_2$. Note that in view of the isomorphism $\mathbb{V}\simeq \mathbb{R}^N$ this also directly translates to an error estimate for trajectories in $\mathbb{V}$.
\begin{corollary}
\label{c:trajest}
Let Assumption~\ref{as:data} hold. Then for any $T>0$ and $\delta,\varepsilon>0$ there is $m_0\in \mathbb{N}$ such that for $m\geq m_0$ data points we have
\begin{align*}
\min_{t \in [0,T]} \mathbb{P}\big(\|z(t) - \tilde z(t)\|_2 \leq \varepsilon\big) \geq 1-\delta.
\end{align*}
\end{corollary}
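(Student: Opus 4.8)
\emph{Proof proposal.} The plan is to write the solutions of \eqref{e:z} and \eqref{e:tz} as matrix exponentials, $z(t)=e^{t\LV}z_0$ and $\tilde z(t)=e^{t\tL}z_0$, to estimate the difference of the two semigroups by a Duhamel/Gr\"onwall argument in terms of the generator error $\|\LV-\tL\|$, and then to control that error with high probability by Theorem~\ref{t:generatorestimate}. If $z_0=0$ both trajectories vanish and the claim is trivial, so assume $\|z_0\|_2>0$.

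First I would use the variation-of-constants identity
\begin{align*}
e^{t\tL}-e^{t\LV}=\int_0^t e^{(t-s)\tL}(\tL-\LV)e^{s\LV}\,\mathrm{d}s,
\end{align*}
which, by submultiplicativity and the elementary bound $\|e^{sM}\|_2\le e^{s\|M\|_2}$ (and $\|\cdot\|_2\le\|\cdot\|_F$ to pass from the Frobenius norm in Theorem~\ref{t:generatorestimate}), yields for every $t\in[0,T]$
\begin{align*}
\|z(t)-\tilde z(t)\|_2\le\|z_0\|_2\,\|\LV-\tL\|_F\int_0^t e^{(t-s)\|\tL\|_2}e^{s\|\LV\|_2}\,\mathrm{d}s\le\|z_0\|_2\,T\,\|\LV-\tL\|_F\,e^{T\max\{\|\LV\|_2,\|\tL\|_2\}}.
\end{align*}
The only nuisance is that the exponential factor still contains the random quantity $\|\tL\|_2$. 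This is removed by intersecting with the event $\{\|\LV-\tL\|_F\le1\}$, on which $\|\tL\|_2\le\|\LV\|_2+1=:\kappa$ with $\kappa$ deterministic; hence on this event
\begin{align*}
\|z(t)-\tilde z(t)\|_2\le\|z_0\|_2\,T\,e^{T\kappa}\,\|\LV-\tL\|_F\qquad\text{for all }t\in[0,T].
\end{align*}

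Next, given $T>0$ and $\varepsilon,\delta>0$, I would set $\tilde\varepsilon:=\min\{1,\ \varepsilon/(\|z_0\|_2\,T\,e^{T\kappa})\}$ and apply Theorem~\ref{t:generatorestimate} with this $\tilde\varepsilon$ and $\tilde\delta:=\delta$ to obtain $m_0\in\mathbb{N}$ such that for every $m\ge m_0$ one has $\mathbb{P}(\|\LV-\tL\|_F\le\tilde\varepsilon)\ge1-\delta$. Since $\tilde\varepsilon\le1$, on this single event the displayed bound applies and gives $\|z(t)-\tilde z(t)\|_2\le\varepsilon$ simultaneously for all $t\in[0,T]$; in particular, for each fixed $t\in[0,T]$, $\mathbb{P}(\|z(t)-\tilde z(t)\|_2\le\varepsilon)\ge1-\delta$, and taking the minimum over $t$ proves the claim.

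I do not expect a genuine obstacle: this is the standard perturbation estimate for linear ODEs combined with the probabilistic generator bound. The only points requiring a little care are keeping $m_0$ independent of $t$ — which is automatic, since the one high-probability event from Theorem~\ref{t:generatorestimate} controls the whole trajectory uniformly on $[0,T]$ — and the dependence of the exponential growth rate on the random norm $\|\tL\|_2$, circumvented by conditioning on $\{\|\LV-\tL\|_F\le1\}$.
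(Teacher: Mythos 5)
Your argument is correct and follows essentially the same route as the paper's: a deterministic Gr\"onwall/Duhamel perturbation bound for the two linear ODEs, uniform on $[0,T]$, combined with the probabilistic generator estimate of Theorem~\ref{t:generatorestimate} applied to a single high-probability event. The only cosmetic difference is how the random factor $e^{t\|\tL\|_2}$ is made deterministic --- you truncate on the event $\{\|\LV-\tL\|_F\le 1\}$ to get the constant $\kappa$, whereas the paper writes $e^{t\|\tL\|_2}\le e^{t\|\LV\|_2}e^{t\|\tL-\LV\|_2}$ so that the whole bound becomes a monotone function of $\|\tL-\LV\|_2$; both devices are valid.
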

\begin{proof}
See Appendix~\ref{s:errorbound_traj}. 
\end{proof}
A sufficient amount of data~$m_0$ can be easily specified by combining the calculations displayed in the proof of Corollary~\ref{c:trajest}, i.e.\ Gronwall's inequality and Condition \eqref{eq:ineq_m}. Under additional assumptions on the Koopman semigroup generated by $\LV$, e.g., stability, one can refine this estimate or render it uniform in $T$, cf.\ Corollary~\ref{c:refinements} in Appendix~\ref{s:errorbound_traj}.
\subsection{Error bound for the projected Koopman operator}
\noindent
Similar to the derivation of the probabilistic bound on the projected generator, a bound on the Koopman operator is possible. We briefly sketch the main steps of the argumentation. Let $t = l\Delta_t$ for some $l\in\N$ and again choose a subspace $\mathbb{V} = \operatorname{span}\{\{\psi_j\}_{j=1}^N\}\subset L^2_\mu(\mathbb{X})$ (which, in contrast to the generator-based setting, is not required to be contained in the domain). The restricted Koopman operator on this subspace is defined via
\begin{align*}
\mathcal{K}^{t}_\mathbb{V}:= P_\mathbb{V}\mathcal{K}^{t}\restrict{\mathbb{V}} = C^{-1} A,
\end{align*}
where
$$
C_{i,j}=\langle\psi_i,\psi_j\rangle_{\Lmu}
\qquad\text{and}\qquad
A_{i,j} =\langle\psi_i,\mathcal{K}^{t}\psi_j\rangle_{\Lmu}.
$$
Define the data matrices 
\begin{align*}
\Psi(X) &:= \left( \left. \left(\begin{smallmatrix}
\psi_1(x_0)\\
:\\
\psi_N(x_0)
\end{smallmatrix}\right)\right| \ldots \left| \left(\begin{smallmatrix}
\psi_1(x_{m-l-1})\\
:\\
\psi_N(x_{m-l-1})
\end{smallmatrix}\right)\right. \right)\\
\Psi(Y) &:= \left( \left. \left(\begin{smallmatrix}
\psi_1(x_l)\\
:\\
\psi_N(x_l)
\end{smallmatrix}\right)\right| \ldots \left| \left(\begin{smallmatrix}
\psi_1(x_{m-1})\\
:\\
\psi_N(x_{m-1})
\end{smallmatrix}\right)\right. \right).
\end{align*}
The empirical estimator is then defined similarly to the generator setting via
$$
\tilde{\mathcal{K}}_m^{t} =\tC^{-1} \tA
$$
with 
$$
\tC = \tfrac{1}{m} \Psi(X) \Psi(X)^\top
\qquad\text{and}\qquad
\tA= \tfrac{1}{m} \Psi(X) \Psi(Y)^\top.
$$
We now present the analogue to Theorem~\ref{t:generatorestimate} for the Koopman operator which follows by straightforward adaptations of the results of Section~\ref{ss:projgen}.
\begin{theorem}\label{c:semigroupestimate}
Let Assumption~\ref{as:data} hold. Then, for $t\geq 0$, any error bound $\varepsilon> 0$ and probabilistic tolerance $\delta \in (0,1)$ there is $m_0\in \mathbb{N}$ such that for any $m\geq m_0$,
\begin{align*}
\mathbb{P}\left(\|\mathcal{K}^{t}_\mathbb{V}-\tilde{\mathcal{K}}_m^{t}\|_{F}\leq \varepsilon\right) \geq 1-\delta.
\end{align*}
\end{theorem}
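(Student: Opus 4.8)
The plan is to mirror the structure of the proof of Theorem~\ref{t:generatorestimate}, replacing the role of the stiffness matrix $A$ (built from $\calL$) by the matrix built from $\calK^t$. First I would introduce the centered matrix-valued random variables
\begin{align*}
\Phi_C(x) := \Psi(x)\Psi(x)^\top - C,
\qquad
\Phi_A(x) := \Psi(x)\Psi(\calK^t x)^\top - A,
\end{align*}
where the second one needs a minor reinterpretation because $\Psi(Y)$ is data at shifted times rather than an operator applied pointwise: the relevant identity is $\mathbb{E}^\mu[\psi_i(x_0)\psi_j(x_l)] = \langle\psi_i,\calK^{l\Delta_t}\psi_j\rangle_\mu = A_{ij}$, so the column $\Psi(x_0)\Psi(x_l)^\top$ is an unbiased (though now vector-valued in its randomness) estimator of $A$ whose average over $k=0,\dots,m-l-1$ is exactly $\tA$. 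Thus $\tC - C = [\ol{\Phi_C}]_m$ and $\tA - A = [\ol{\Phi_A}]_{m-l}$ up to the cosmetic replacement of $m$ by $m-l$ in the number of summands.

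Next I would invoke Proposition~\ref{thm:prob_error_bounds_xi} (in the i.i.d.\ case Remark~\ref{rem:iid}) applied to $\Phi_C$ and $\Phi_A$ to obtain, for any $\delta,\eta>0$, a threshold $m_0$ such that for $m\ge m_0$ both $\|C-\tC\|_F\le\eta$ and $\|A-\tA\|_F\le\eta$ hold with probability at least $1-\tfrac{\tilde\delta}{3}$ each. Here one must check that the entry-wise variances and asymptotic variances of $\Phi_A$ are finite: this follows from $\psi_j\in L^2_\mu(\XX)$ together with the contraction property $\|\calK^t\|_{L^2_\mu}\le 1$, so that $\psi_i(x_0)\psi_j(x_l)\in L^1_\mu$ and, under the stronger moment assumptions implicit in Assumption~\ref{as:data}, in $L^2_\mu$; for the ergodic bound one additionally uses exponential stability on $L^2_{\mu,0}(\XX)$ exactly as in Proposition~\ref{thm:prob_error_bounds_xi}. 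From here the perturbation computation is verbatim that of Theorem~\ref{t:generatorestimate}: bound $\|\tC^{-1}-C^{-1}\|_F$ by $\tfrac{\|C^{-1}\|^2\|C-\tC\|}{1-\|C^{-1}\|\|C-\tC\|}$, split $\|C^{-1}A-\tC^{-1}\tA\|_F\le\|\tC^{-1}\|\|A-\tA\|+\|C^{-1}-\tC^{-1}\|\|A\|$, and choose $\eta$ small enough (in terms of $\varepsilon$, $\|A\|$, $\|C^{-1}\|$) that a union bound over the three events gives the claimed probability $1-\delta$.

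The main obstacle, and the only genuinely new point relative to Theorem~\ref{t:generatorestimate}, is the bookkeeping around the time shift: the samples entering $\tA$ are the pairs $(x_k,x_{k+l})$ for $k=0,\dots,m-l-1$, so the effective sample size is $m-l$ rather than $m$, and in the ergodic setting the shifted variables $\phi_k=\psi_i(x_k)\psi_j(x_{k+l})-A_{ij}$ are still a stationary sequence (since $x_0\sim\mu$ invariant), so Lemma~\ref{lemma:co-variances} and Lemma~\ref{lemma:bound_remainder} apply to them without change, only with $\sigma^2_\phi$ now the variance of a product of observables. One should also note that the asymptotic-variance positivity hypothesis $\sigma^2_{\phi_{ij},\infty}>0$ is harmless here, or can be circumvented by adding a vanishing regularization as in the remark following that proposition. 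Everything else — the isomorphism between $\mathbb V$ and $\R^N$, strong continuity, density of $D(\calL)$ — is not needed since we work directly at the level of the matrices $C$, $A$, $\tC$, $\tA$. Hence the proof reduces to ``same argument, shifted data, $m\mapsto m-l$'', which is exactly why the authors state it can be obtained ``by straightforward adaptations.''
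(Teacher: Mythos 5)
Your proposal is correct and is essentially the argument the paper intends: the paper offers no proof beyond asserting that the result ``follows by straightforward adaptations'' of Section~\ref{ss:projgen}, and your adaptation (centered matrix variables for $C$ and the time-shifted $A$, Proposition~\ref{thm:prob_error_bounds_xi} resp.\ Remark~\ref{rem:iid}, then the verbatim perturbation and union-bound step of Theorem~\ref{t:generatorestimate}) is exactly that, with the relevant bookkeeping ($m\mapsto m-l$, stationarity of the shifted pairs) correctly identified. The one place you are slightly too quick is the claim that Lemma~\ref{lemma:co-variances} applies ``without change'': for the pair process $\phi_k=\psi_i(x_k)\psi_j(x_{k+l})-A_{ij}$ the autocovariances are no longer of the form $\langle\phi,\mathcal{K}^{r\Delta_t}\phi\rangle_\mu$ for a single observable, but must be rewritten via the Markov property as $\mathcal{K}^{(r-l)\Delta_t}$ acting on $\psi_i\cdot\mathcal{K}^{l\Delta_t}\psi_j$ for $r\ge l$ (with the finitely many overlapping terms $r<l$ handled by Cauchy--Schwarz), after which the exponential decay and hence the conclusion survive.
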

\noindent A sufficient amount of data~$m_0$ can be specified analogously to Theorem~\ref{t:generatorestimate}.}
\section{Extension to control systems}
\label{sec:control}

\MS{\noindent In this section, we derive probabilistic bounds on the approximation error of nonliner control-affine SDE systems of the form
\begin{align}\label{e:sde_control}
\text{d}X_t = \left(F(X_t)+   \sum_{i=1}^{n_c} G_i(X_t)u_i \right)\text{d}t  + \sigma(X_t) \,\text{d}W_t,
\end{align}
with input $u\in\mathbb{R}^{n_c}$ and state $X_t\in\XX$, where $F:\XX\to\mathbb{R}^n$ and $G_i: \XX\to\mathbb{R}^{n}$, $i = 1,\ldots,n_c$, are locally Lipschitz-continuous vector fields. In the deterministic case $\sigma\equiv 0$ the controlled SDE reduces to the control-affine ODE system
\begin{align}\label{e:controlaffine}
\dot{x} = F(x) + \sum_{i=1}^{n_c} G_i(x)u_i.
\end{align}
We will describe how one can apply the bounds on the generators of autonomous (SDE) systems obtained in Section~\ref{sec:SDE} in order to obtain bounds for prediction of control systems, either for i.i.d.\ or ergodic sampling.


Central in this part is the fact that the Koopman generators for control-affine systems are control-affine. More precisely, if $\mathcal{L}^{\bar{u}}$ denotes the Koopman generator for a control-affine system with constant control $\bar{u}\in \mathbb{R}^{n_c}$ and $\bar u = \sum_{i=1}^{r}\alpha_i \bar u_i$ is a linear combination of constant controls $\bar u_i\in\R^{n_c}$, we have
\begin{align}\label{e:gen_ca}
\calL^{\bar u} = \calL^0 + \sum_{i=1}^{n_c}\alpha_i\big(\calL^{\bar u_i} - \calL^0\big).
\end{align}
This easily follows from the representation \eqref{e:repL} of the Koopman generator, see also \cite[Theorem 3.2]{Peitz2020} for the special (deterministic) case $\sigma\equiv 0$.

We will utilize this property to invoke our results from Section~\ref{sec:SDE} to approximate the Koopman generator corresponding to basis elements of the control space, that is, $\mathcal{L}^{e_i}$, $i=1,\ldots,n_c$, and $\mathcal{L}^0$ corresponding to the drift term to form a bilinear control system in the observables.

Analogously to Assumption~\ref{as:data} we have the following two cases for the collected data and the underlying measure.
\begin{assumption}
	\label{as:control_data}
	Let either of the following hold:
	\begin{itemize}
		\item[(iid)] The data for each autonomous system with control $u=e_i$, $i=0,\ldots,n_c$, is sampled i.i.d.\ from either the normalized Lebesgue measure and contained in a compact set $\mathbb{X}$ or from an invariant measure $\mu_i$ in the sense of Definition~\ref{def:invariant_measure}.
		\item[(erg)] The data for each autonomous system with control $u=e_i$, $i=0,\ldots,n_c$, satisfies Assumption~\ref{as:data}.(erg), i.e., is drawn from a single ergodic trajectory, the probability measure $\mu_i$ of the resulting autonomous SDE is invariant in the sense of Definition~\ref{def:invariant_measure} and the Koopman semigroup is exponentially stable on $L^2_{\mu_i,0}(\mathbb{X})$.
	\end{itemize}
\end{assumption}
\noindent It is important to note that in the first case of (iid), we did not make any assumption of invariance of the set $\mathbb{X}$ for all autonomous systems corresponding to the constant controls $e_i$, $i=0,\ldots,n_c$, as this would be very restrictive. Hence, we have to ensure that the state trajectories remain (with probability one in the stochastic setting~\eqref{e:sde_control}) in the set~$\mathbb{X}$. Sufficient conditions are, e.g., controlled forward invariance of the set~$\mathbb{X}$ or knowing that the initial condition is contained in a suitable sub-level set of the optimal value function of a respective optimal control problem, see, e.g., \cite{BoccGrun14} or \cite{EsteWort20} for an illustrative application of such a technique in showing recursive stability of Model Predictive Control (MPC) without stabilizing terminal constraints for discrete- and continuous-time systems, respectively.

 In the following, we set $\mathcal{O}_i = L^2_{\mu_i}(\mathbb{X})$, $i=1,\ldots,n_c$, and consider the generators $\calL^{e_i}$ in these spaces, respectively. Further, let $\psi_1,\ldots,\psi_N : \mathbb X\to\R$ be $N$ linearly independent observables whose span $\mathbb V = \linspan\{\psi_1,\ldots,\psi_N\}$ satisfies
\begin{align}\label{e:intersec}
\mathbb{V} \subset D(\mathcal{L}^{e_0})\cap D(\mathcal{L}^{e_1})\cap \ldots \cap D(\mathcal{L}^{e_{n_c}}),
\end{align}
where $e_i$, $i=1,\ldots,n_c$, denote the standard basis vectors of $\mathbb{R}^{n_c}$ and $e_0 := 0$. We now discuss two cases of sampling, one corresponding to the approach of Section \ref{sec:SDE} and one to the standard case of i.i.d.\ sampling as in \cite{ZhanZuaz21}.

As the original system and the Koopman generator are control affine, the remainder of this section is split up into two parts. First, we derive error estimates corresponding to autonomous systems driven by $n_c+1$ constant controls. Second, we use these estimates and control affinity to deduce a result for general controls.

\noindent In accordance with the notation in Section \ref{sec:SDE} we define $\calL_{\mathbb V}^{e_i} := P_{\mathbb V}\calL^{e_i}|_{\mathbb V}$ and also use this symbol to denote the matrix representation of this linear operator w.r.t.\ to the basis $\{\psi_1,\ldots,\psi_N\}$ of $\mathbb V$. Its approximation based on the data $x_0,\ldots,x_{m-1}\in\mathbb X$ will be denoted by $\tilde\calL_m^{e_i}$.

\begin{proposition}\label{p:single_estimate}
Let $i \in \{0,\ldots,n_c\}$ be given and Assumption~\ref{as:control_data} hold.
Then, for any pair consisting of a desired error bound $\varepsilon > 0$ and a probabilistic tolerance $\delta\in (0,1)$, there is a number of data points $m_i$ such that for any $m \geq m_i$, we have the estimate
\begin{align*}
\mathbb{P}\big( \| \LV^{e_i}-\tL^{e_i}\|_F\leq \varepsilon\big) \geq 1-\delta.
\end{align*}
The minimal amount of data $m_i$ is given by the formulas of Theorem~\ref{t:generatorestimate}.
\end{proposition}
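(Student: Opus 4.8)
The plan is to recognize that Proposition~\ref{p:single_estimate} follows by specializing Theorem~\ref{t:generatorestimate} to a single autonomous system. First I would fix $i\in\{0,\ldots,n_c\}$ and freeze the control in \eqref{e:sde_control} at the constant value $u\equiv e_i$, which produces the autonomous SDE
\begin{align*}
\text{d}X_t = \bigl(F(X_t)+G_i(X_t)\bigr)\,\text{d}t + \sigma(X_t)\,\text{d}W_t
\end{align*}
for $i\ge 1$, and $\text{d}X_t = F(X_t)\,\text{d}t+\sigma(X_t)\,\text{d}W_t$ for $i=0$ since $e_0:=0$. This is an instance of \eqref{e:SDE}, and by \eqref{e:repL} its Koopman generator is exactly $\mathcal{L}^{e_i}$, viewed as an operator on $\mathcal{O}_i=L^2_{\mu_i}(\mathbb{X})$.

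Second, I would verify that Assumption~\ref{as:control_data} supplies, for this fixed $i$, precisely the hypotheses required in Section~\ref{ss:projgen}. In case (iid), the data for system $i$ is i.i.d.\ either from the normalized Lebesgue measure on the compact set $\mathbb{X}$ or from the invariant measure $\mu_i$; in case (erg), it is a single ergodic trajectory with $\mu_i$ invariant and $\mathcal{K}^t$ exponentially stable on $L^2_{\mu_i,0}(\mathbb{X})$. In either case this is Assumption~\ref{as:data} with $\mu$ replaced by $\mu_i$. Furthermore, \eqref{e:intersec} gives $\mathbb{V}\subset D(\mathcal{L}^{e_i})$, so that $\LV^{e_i}=C_i^{-1}A_i$ is well defined with $(C_i)_{k,l}=\langle\psi_k,\psi_l\rangle_{\mu_i}$ and $(A_i)_{k,l}=\langle\psi_k,\mathcal{L}^{e_i}\psi_l\rangle_{\mu_i}$, while the empirical estimator $\tL^{e_i}=\tilde C_m^{-1}\tilde A_m$ is built from the transformed data matrices $\Psi(X)$, $\mathcal{L}^{e_i}\Psi(X)$ exactly as in Section~\ref{subsec:edmd0}.

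Third, I would apply Theorem~\ref{t:generatorestimate} verbatim to this autonomous system: for any $\varepsilon>0$, $\delta\in(0,1)$, and any $m$ no smaller than the threshold given by the appropriate formula there (ergodic, ergodic-reversible, or i.i.d., with $C$, $A$ and the variance matrices $\Sigma_\Phi$, $\Sigma_{\Phi,\infty}$ all computed for system $i$), one obtains $\mathbb{P}(\|\LV^{e_i}-\tL^{e_i}\|_F\le\varepsilon)\ge 1-\delta$. Setting $m_i$ equal to this threshold proves the claim.

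The only genuine subtlety — and what I expect to be the main (if mild) obstacle — is the Lebesgue subcase of Assumption~\ref{as:control_data}.(iid): in contrast to Section~\ref{sec:SDE}, the set $\mathbb{X}$ need not be forward invariant under the dynamics driven by $e_i$, so Assumption~\ref{as:dyn}.(a) is not literally in force. As discussed after Assumption~\ref{as:control_data} and in \cite[Section 3.2]{ZhanZuaz21}, one resolves this by restricting to states that remain in $\mathbb{X}$ (e.g.\ under controlled forward invariance or a sub-level-set condition on the initial state), after which the restricted semigroup and the associated $L^2_{\mu_i}(\mathbb{X})$ Galerkin theory carry over without change and the argument of Theorem~\ref{t:generatorestimate} applies unmodified.
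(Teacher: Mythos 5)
Your proposal is correct and takes essentially the same route as the paper, whose entire proof reads ``The claim follows immediately from applying Theorem~\ref{t:generatorestimate}''; you simply make explicit the specialization to the autonomous system with frozen control $u\equiv e_i$ and the matching of Assumption~\ref{as:control_data} to Assumption~\ref{as:data}. Your closing caveat about the non-invariance of $\mathbb{X}$ in the Lebesgue subcase of (iid) is consistent with the paper's own discussion following Assumption~\ref{as:control_data}.
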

\begin{proof}
The claim follows immediately from applying Theorem~\ref{t:generatorestimate}.
\end{proof}


\noindent Having obtained an estimate for the autonomous systems corresponding to the constant controls $e_i, i=0,\ldots n_c$, we can leverage the control affinity of the system to formulate the corresponding results for arbitrary controls. To this end, for any control $u(t) = \sum_{i=1}^{n_c}\alpha_i(t) e_i \in L^\infty(0,T;\mathbb{R}^{n_c})$, we define the projected Koopman generator and its approximation corresponding to the non-autonomous system with control $u$ by
\begin{align*}
\LV^u (t) &:= \LV^0 + \sum_{i=1}^{n_c}\alpha_i(t)\big(\LV^{e_i}-\LV^0\big),\\
\tL^u(t) &:= \tL^0 + \sum_{i=1}^{n_c}\alpha_i(t)\big(\tL^{e_i}-\tL^0\big).
\end{align*}

\begin{theorem}\label{t:coupled_estimate}
Let Assumption~\ref{as:control_data}  hold. 
Then, for any pair consisting of a desired error bound $\tilde\varepsilon > 0$ and probabilistic tolerance $\tilde\delta \in (0,1)$, prediction horizon $T>0$, and control function $u\in L^\infty(0,T;\mathbb{R}^{n_c})$ 
we have
\begin{align*}
\operatorname{ess\,inf}_{t \in [0,T]}\mathbb{P}\big(\|\LV^u(t) - \tL^u(t)\|_F \leq \tilde\varepsilon\big) \geq 1-\tilde\delta,
\end{align*}	
provided that the number~$m$ of data points exceeds $\max_{i=0,\ldots,n_c} m_i$ with $m_i$ defined as in Proposition~\rmref{p:single_estimate} with
$$
\veps = \tfrac{\tilde\varepsilon}{(n_c+1)\left(1  + \sum_{i=1}^{n_c}\|\alpha_i\|_{L^\infty(0,T)}\right)}
\qquad\text{and}\qquad
\delta = 1-\tfrac{\tilde\delta}{n_c+1}.
$$
\end{theorem}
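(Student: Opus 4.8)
The plan is to reduce the statement to the $n_c+1$ autonomous estimates provided by Proposition~\ref{p:single_estimate}, exploiting the control affinity \eqref{e:gen_ca} of the projected generator. Writing $u(t)=\sum_{i=1}^{n_c}\alpha_i(t)e_i$ and inserting the definitions of $\LV^u(t)$ and $\tL^u(t)$, one first records the pointwise-in-$t$ identity
\begin{align*}
\LV^u(t)-\tL^u(t)
= \Big(1-\sum_{i=1}^{n_c}\alpha_i(t)\Big)\big(\LV^0-\tL^0\big) + \sum_{i=1}^{n_c}\alpha_i(t)\big(\LV^{e_i}-\tL^{e_i}\big),
\end{align*}
so that the approximation error of the controlled (projected) generator is a linear combination, with $t$-dependent coefficients, of the $n_c+1$ autonomous approximation errors $\LV^{e_i}-\tL^{e_i}$, $i=0,\dots,n_c$.

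Next I would take Frobenius norms and apply the triangle inequality. For almost every $t\in[0,T]$ one has $|\alpha_i(t)|\le\|\alpha_i\|_{L^\infty(0,T)}$, and hence each of the $n_c+1$ scalar coefficients above is bounded in modulus by $1+\sum_{i=1}^{n_c}\|\alpha_i\|_{L^\infty(0,T)}$; this yields, for a.e.\ $t$,
\begin{align*}
\|\LV^u(t)-\tL^u(t)\|_F
\le \Big(1+\sum_{i=1}^{n_c}\|\alpha_i\|_{L^\infty(0,T)}\Big)\sum_{i=0}^{n_c}\|\LV^{e_i}-\tL^{e_i}\|_F .
\end{align*}
The key feature is that the right-hand side is a (random) quantity not depending on $t$.

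Then I would invoke Proposition~\ref{p:single_estimate} with the prescribed $\varepsilon$ and with $\delta$ chosen so that $(n_c+1)\delta=\tilde\delta$: for each $i\in\{0,\dots,n_c\}$ there is $m_i$ such that, whenever $m\ge m_i$, the event $E_i:=\{\|\LV^{e_i}-\tL^{e_i}\|_F\le\varepsilon\}$ has probability at least $1-\delta$. Taking $m\ge\max_i m_i$ and intersecting these $n_c+1$ events, a union bound (Lemma~\ref{lem:probabilities}) gives $\mathbb{P}\big(\bigcap_{i=0}^{n_c}E_i\big)\ge 1-(n_c+1)\delta=1-\tilde\delta$. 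On this intersection the previous display becomes, for a.e.\ $t\in[0,T]$,
\begin{align*}
\|\LV^u(t)-\tL^u(t)\|_F
\le \Big(1+\sum_{i=1}^{n_c}\|\alpha_i\|_{L^\infty(0,T)}\Big)(n_c+1)\,\varepsilon = \tilde\varepsilon
\end{align*}
by the definition of $\varepsilon$. Since $\bigcap_i E_i$ does not depend on $t$ while the bound holds for a.e.\ $t$, we obtain $\mathbb{P}\big(\|\LV^u(t)-\tL^u(t)\|_F\le\tilde\varepsilon\big)\ge 1-\tilde\delta$ for a.e.\ $t\in[0,T]$, so $\operatorname{ess\,inf}_{t\in[0,T]}$ of this probability is at least $1-\tilde\delta$, which is the claim.

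I do not expect a genuine obstacle: the argument is essentially algebra plus a union bound. The two points requiring care are (i) that the favourable event $\bigcap_i E_i$ must be selected once and for all, independently of $t$, so that it factors out of $\operatorname{ess\,inf}_t$ — this is precisely what forces the passage to $L^\infty$ bounds on the $\alpha_i$ and accounts for the essential (rather than pointwise) infimum — and (ii) careful bookkeeping of constants so that the stated $\varepsilon$ and sample sizes $m_i$ really deliver the target tolerance $\tilde\varepsilon$ and $\tilde\delta$. A minor remark: if the $n_c+1$ data sets are sampled independently, the intersection probability improves to $(1-\delta)^{n_c+1}$, but the cruder union bound $1-(n_c+1)\delta$ used above avoids that assumption and already suffices.
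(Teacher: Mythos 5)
Your proof is correct and follows essentially the same route as the paper: the same control-affine decomposition $\LV^u(t)-\tL^u(t)=\bigl(1-\sum_i\alpha_i(t)\bigr)\bigl(\LV^0-\tL^0\bigr)+\sum_i\alpha_i(t)\bigl(\LV^{e_i}-\tL^{e_i}\bigr)$, the same passage to $L^\infty$ bounds on the $\alpha_i$ to make the favourable event independent of $t$, and the same union bound via Lemma~\ref{lem:probabilities} applied to the $n_c+1$ events from Proposition~\ref{p:single_estimate}. Note that your choice $\delta=\tilde\delta/(n_c+1)$ is the one actually used in the paper's proof; the ``$\delta = 1-\tilde\delta/(n_c+1)$'' appearing in the theorem statement is evidently a typo, since it would only guarantee each single-generator event with probability $\tilde\delta/(n_c+1)$.
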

\begin{proof}
Again, we omit the subscript of the norm and set $\|\cdot\|=\|\cdot\|_F$.
Using the result of Proposition~\ref{p:single_estimate} and our choice of $m_0$, we have
$$
\mathbb{P}\left( \|\tL^{0}-\LV^0\| \leq \tfrac{\tilde\varepsilon}{(n_c+1)\left(1  + \sum_{i=1}^{n_c}\|\alpha_i\|_{L^\infty(0,T)}\right)}\right) \geq 1-\tfrac{\tilde\delta}{n_c + 1},
$$
and for all $i\in 1,\ldots n_c$
$$
\mathbb{P}\left(\|\LV^{e_i} - \tL^{e_i}\| \leq \tfrac{\tilde\veps}{\left(n_c+1\right)\|\alpha_i\|_{L^\infty(0,T)}}\right) \geq 1- \tfrac{\tilde\delta}{n_c + 1}.
$$
Then we compute for $a.e.\ t\in [0,T]$
\begin{align*}
& \mathbb{P}\left( \| \LV^u(t) -  \tL^u(t) \|\leq \tilde\veps\right)& \\
&\geq \mathbb{P}\left(\left\|\left(1-\sum_{i=1}^{n_c}\alpha_i(t)\right)\left(\LV^0 - \tilde\calL_m^0\right)\right\|+ \sum_{i=1}^{n_c} \left\| \alpha_i(t)\left(\tL^{e_i} - \LV^{e_i}\right)\right\| \leq \tilde\veps\right)\\
&\geq \mathbb{P}\left(\left\|\left(1-\sum_{i=1}^{n_c}\alpha_i(t)\right)\left(\LV^0 - \tilde\calL_m^0\right)\right\| \leq \tfrac{\tilde\veps}{n_c+1}\,\wedge\,
\displaystyle\mathop{\mathlarger{\mathlarger{\mathlarger{\forall}}}}_{i=1}^{n_c} : 
\left\|\alpha_i(t) \left(\tL^{e_i} - \LV^{e_i}\right)\right\|
 \leq \tfrac{\tilde\veps}{n_c+1}\right).
\end{align*}
Next, we use Lemma~\ref{lem:probabilities} from Appendix \ref{ss:technical} with $d = n_c+1$, 
$$
A_0 = \left\{\left\|\left(1-\sum_{i=1}^{n_c}\alpha_i(t)\right)\left(\LV^0 - \tilde\calL_m^0\right)\right\| \leq \tfrac{\tilde\veps}{n_c+1}\right\}
\quad \text{and}\quad
A_i = \left\{ \left\|\alpha_i(t) \left(\tL^{e_i} - \LV^{e_i}\right)\right\| \leq \tfrac{\tilde\veps}{n_c+1}\right\}
$$
for $i=1,\ldots,n_c$. This yields
\begin{align*}
& \mathbb{P}\left( \| \LV^u(t) -  \tL^u(t) \|\leq \tilde\veps\right)& \\
&\geq \mathbb{P}\left( \left\|\left(1-\sum_{i=1}^{n_c}\alpha_i(t)\right)\left(\LV^0 - \tilde\calL_m^0\right)\right\| \leq \tfrac{\tilde\veps}{n_c+1} \right)
+ \sum_{i=1}^{n_c} \mathbb{P}\left( \|\alpha_i(t)\big(\tL^{e_i}-\LV^{{e_i}}\big)\| \leq \tfrac{\tilde\veps}{n_c+1}  \right) - n_c \\
&\geq \mathbb{P}\left( \|\tL^{0}-\LV^0\| \leq \tfrac{\tilde\veps}{\left(1  + \sum_{i=1}^{n_c} \|\alpha_i\|_{L^\infty(0,T)}\right)(n_c+1)}  \right)
+ \sum_{i=1}^{n_c} \mathbb{P}\left( \|\tL^{e_i}-\LV^{{e_i}}\| \leq \tfrac{\tilde\veps}{\left(n_c+1\right)\|\alpha_i\|_{L^\infty(0,T)}}  \right) - n_c\\
&\geq 1-\tfrac{\tilde\delta}{n_c+1} + \sum_{i=1}^{n_c} \left(1-\tfrac{\tilde\delta}{n_c+1}\right) - n_c = 1-\tilde\delta.
\end{align*}
Taking the essential infimum yields the result.
\end{proof}

\noindent Again, similar as in the previous section, we obtain a bound on trajectories via Gronwall, if the state response is contained in $\mathbb{X}$. 

\begin{corollary}\label{c:control_trajectory}
Let Assumption~\ref{as:control_data}  hold. Let $T,\veps>0$ and $\delta\in (0,1)$, $z_0\in \mathbb{R}^N$ and $u\in L^\infty(0,T;\mathbb{R}^{n_c})$ such that the solution of \eqref{e:SDE} is contained in $\mathbb{X}$ with probability one. 
Then there is $m_0\in \mathbb{N}$ such that for $m\geq m_0$ 
the solutions $z,\tilde{z}$ of
\begin{align*}
&&\dot{z}(t) &= \LV^u(t)z &&z(0)=z_0\\
&&\dot{\tilde{z}}(t) &= \tL^u(t)\tilde{z} &&\tilde{z}(0)={z}_0
\end{align*}
satisfy
\begin{align*}
\min_{t\in [0,T]}\mathbb{P}\big( \|z(t)-\tilde{z}(t)\|_2 \leq \varepsilon\big) \geq 1-\delta.
\end{align*}
\end{corollary}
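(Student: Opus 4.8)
The plan is to mirror the proof of Corollary~\ref{c:trajest} (see Appendix~\ref{s:errorbound_traj}): first use Theorem~\ref{t:coupled_estimate} to control the operator error $\|\LV^u(t)-\tL^u(t)\|_F$ uniformly for a.e.\ $t\in[0,T]$ on a single high-probability event, and then run a deterministic Gronwall/variation-of-constants argument on that event. The point is that the favourable event produced in the proof of Theorem~\ref{t:coupled_estimate} does not actually depend on $t$: it is
\[
\Omega_0 := \Big\{\|\tL^{0}-\LV^{0}\| \le \tfrac{\tilde\veps}{(1+\sum_{i}\|\alpha_i\|_{L^\infty(0,T)})(n_c+1)}\Big\} \cap \bigcap_{i=1}^{n_c}\Big\{\|\tL^{e_i}-\LV^{e_i}\| \le \tfrac{\tilde\veps}{(n_c+1)\|\alpha_i\|_{L^\infty(0,T)}}\Big\},
\]
and Proposition~\ref{p:single_estimate} together with Lemma~\ref{lem:probabilities} shows $\mathbb{P}(\Omega_0)\ge 1-\tilde\delta$ once $m\ge\max_{i}m_i$, while on $\Omega_0$ one has $\|\LV^u(t)-\tL^u(t)\|_F\le\tilde\veps$ for a.e.\ $t\in[0,T]$ (this is exactly the estimate chain in the proof of Theorem~\ref{t:coupled_estimate}).

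Next I would set up the error dynamics. Since $u\in L^\infty(0,T;\mathbb{R}^{n_c})$, the map $t\mapsto\LV^u(t)$ is essentially bounded, say $\|\LV^u(t)\|\le\gamma$ for a.e.\ $t$ with $\gamma$ depending only on the matrices $\LV^{e_i}$ and on $\|\alpha_i\|_{L^\infty(0,T)}$; hence $\|z(t)\|_2\le\|z_0\|_2 e^{\gamma T}$ on $[0,T]$, \emph{deterministically}. On $\Omega_0$, writing $e:=z-\tilde z$ one checks $\dot e(t)=\tL^u(t)e(t) + (\LV^u(t)-\tL^u(t))z(t)$ with $e(0)=0$ — here I deliberately keep the deterministic trajectory $z$ in the inhomogeneity so that no a priori bound on the random trajectory $\tilde z$ is needed. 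On $\Omega_0$ we have $\|\tL^u(t)\|\le\gamma+\tilde\veps$ for a.e.\ $t$, so the variation-of-constants formula and Gronwall's inequality yield, for all $t\in[0,T]$,
\[
\|e(t)\|_2 \le \int_0^t e^{(\gamma+\tilde\veps)(t-s)}\,\tilde\veps\,\|z(s)\|_2\,\mathrm{d}s \le \tilde\veps\,\|z_0\|_2\,T\,e^{(2\gamma+\tilde\veps)T}.
\]
Restricting attention to $\tilde\veps\le1$ and setting $K:=\|z_0\|_2\,T\,e^{(2\gamma+1)T}$, it then suffices to apply the above with $\tilde\veps:=\min\{1,\varepsilon/K\}$ and $\tilde\delta:=\delta$; this fixes $m_0$ via Theorem~\ref{t:coupled_estimate}, and for $m\ge m_0$ gives $\min_{t\in[0,T]}\mathbb{P}\big(\|z(t)-\tilde z(t)\|_2\le\varepsilon\big)\ge\mathbb{P}(\Omega_0)\ge1-\delta$.

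The main obstacle I expect is the interchange between the pointwise-in-$t$ probabilistic statement of Theorem~\ref{t:coupled_estimate} and the pathwise Gronwall estimate: one must observe that $\Omega_0$ can be chosen independent of $t$, and that the Lebesgue-null set of exceptional times on which $\|\LV^u(t)-\tL^u(t)\|_F>\tilde\veps$ (or $\|\tL^u(t)\|>\gamma+\tilde\veps$) does not affect the integral in the variation-of-constants formula, so that the bound holds for every $t\in[0,T]$ on $\Omega_0$. The hypothesis that the state response of \eqref{e:sde_control} stays in $\mathbb{X}$ with probability one enters only to guarantee that Assumption~\ref{as:control_data}, and hence Theorem~\ref{t:coupled_estimate}, is applicable; it plays no further role in the $\mathbb{R}^N$-level comparison of $z$ and $\tilde z$. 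Existence and uniqueness of $z,\tilde z$ for $L^\infty$-in-time coefficients, together with the elementary bounds above, are routine.
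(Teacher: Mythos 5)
Your proposal is correct and follows essentially the same route as the paper, whose proof of Corollary~\ref{c:control_trajectory} is just ``apply the Gronwall argument of Lemma~\ref{lem:gronwall}/Corollary~\ref{c:trajest} with the time-dependent generator error from Theorem~\ref{t:coupled_estimate}.'' Your two refinements---making explicit that the favourable event $\Omega_0$ is independent of $t$ (which is genuinely needed, since the bare statement of Theorem~\ref{t:coupled_estimate} only gives pointwise-in-$t$ probabilities), and placing the deterministic trajectory $z$ rather than $\tilde z$ in the inhomogeneity so that no a priori bound on the perturbed trajectory is required---are sensible tightenings of the same argument rather than a different one.
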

\begin{proof}
See Appendix~\ref{s:errorbound_traj}.
\end{proof}

\noindent As in Corollary~\ref{c:trajest}, $m_0$ can explicitly be computed by combining Theorem~\ref{t:coupled_estimate} with the constants in Gronwalls inequality.

We conclude this section with a final corollary regarding the optimality of the solution obtained using an error-certified Koopman model. To this end, we consider the optimal control problem with $x_0\in \mathbb{X}$ and a stage cost $\ell:\R^n\times \R^{n_c} \to \mathbb{R}$:
\begin{equation}\label{eq:OCP_full}
\begin{aligned}
\min_{u\in L^\infty(0,T;\mathbb{R}^{n_c})} &\int_0^T \ell(x(t),u(t))\,\text{d}t\\
\mbox{s.t.}\qquad \dot{x} = &F(x) + \sum_{i=1}^{n_c} G_i(x)u_i, \qquad x(0)=x_0.
\end{aligned}
\end{equation}
In what follows, we compare the optimal value of the Koopman representation of \eqref{eq:OCP_full} projected onto the subspace of observables $\mathbb{V}$ with initial datum $z_0 = \Psi (x_0)$
\begin{equation}\label{eq:OCP_full_Koop}
\begin{aligned}
\min_{\alpha\in L^\infty(0,T;\mathbb{R}^{n_c})} &\int_0^T \ell(P(z(t)),\alpha(t))\,\text{d}t\\
\mbox{s.t.}\qquad \dot{{z}}(t) = &\left[\LV^0 + \sum_{i=1}^{n_c}\alpha_i(t)\left(\LV^{e_i}-\LV^0\right)\right]{z}(t), \qquad {z}(0)={z}_0,
\end{aligned}
\end{equation}
to the optimal value of the surrogate-based control problem:
\begin{equation}\label{eq:OCP_surrogate}
\begin{aligned}
\min_{\tilde \alpha\in L^\infty(0,T;\mathbb{R}^{n_c})}& \int_0^T \ell(P(\tilde{z}(t)),\tilde{\alpha}(t))\,\text{d}t\\
\mbox{s.t.}\qquad \dot{\tilde{z}}(t) = &\left[\tL^0 + \sum_{i=1}^{n_c}\tilde\alpha_i(t)\left(\tL^{e_i}-\tL^0\right)\right]\tilde{z}(t), \qquad \tilde{z}(0)={z}_0,
\end{aligned}
\end{equation}
where $P$ maps a trajectory of observables to a trajectory in the state space, which in practice is frequently realized by including the coordinates of the identity function in the dictionary~$\Psi$ of observables.


\begin{corollary}\label{c:control_problem}
Let $T,\veps>0$, $\delta\in (0,1)$, $z_0\in \mathbb{R}^N$, let $J$ be locally Lipschitz continuous and let Assumption~\ref{as:control_data} hold.
Furthermore, let $(z^*,\alpha^*)$ be an optimal solution of problem \eqref{eq:OCP_full_Koop} such that the state response of \eqref{eq:OCP_full} emanating from the control $\alpha^*$ is contained in $\mathbb{X}$. Then there is $m_0\in \mathbb{N}$ such that for $m\geq m_0$ data points contained in $\mathbb{X}$, there exists a tuple $(\tilde{z},\tilde{\alpha})$ which is feasible for \eqref{eq:OCP_surrogate} such that for the cost, we have the estimate
\[
\mathbb{P}\left(\left\vert\int_0^T\ell(P(\tilde{z}(t)),\tilde{\alpha}(t)) - \ell(P(z^*(t)),\alpha^*(t))\,\mathrm{d}t)\right\vert \leq \varepsilon\right) \geq 1-\delta.
\]
\end{corollary}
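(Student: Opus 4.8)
\emph{Plan of proof.} The idea is to use the optimal control $\alpha^*$ of the projected problem~\eqref{eq:OCP_full_Koop} itself as the control of the surrogate problem~\eqref{eq:OCP_surrogate}, and then to propagate the generator error bound of Theorem~\ref{t:coupled_estimate} through Gronwall's inequality and the local Lipschitz continuity of the stage cost~$\ell$. Concretely I would set $\tilde\alpha:=\alpha^*$ and let $\tilde z$ be the unique solution of the surrogate dynamics in~\eqref{eq:OCP_surrogate} with this control and $\tilde z(0)=z_0$; since~\eqref{eq:OCP_surrogate} imposes no constraints beyond its linear time-varying dynamics, $(\tilde z,\tilde\alpha)$ is automatically feasible, and only the cost estimate remains.

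For that estimate I would first establish a \emph{uniform-in-time} trajectory bound on a single ``good'' event. Inspecting the proof of Theorem~\ref{t:coupled_estimate} (equivalently Corollary~\ref{c:control_trajectory}), the event on which the generator error is controlled for the control $\alpha^*$ is
\[
E=\Big\{\|\LV^{e_i}-\tL^{e_i}\|_F\le\varepsilon_i,\ i=0,\ldots,n_c\Big\},
\]
which is independent of $t$; for any prescribed generator tolerance $\eta>0$ that theorem furnishes $m_0\in\N$ such that $\mathbb{P}(E)\ge1-\delta$ whenever $m\ge m_0$, and on $E$ one has $\|\LV^{\alpha^*}(t)-\tL^{\alpha^*}(t)\|_F\le\eta$ for a.e.\ $t\in[0,T]$. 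On $E$, setting $e(t)=z^*(t)-\tilde z(t)$ and using
\[
\dot e(t)=\LV^{\alpha^*}(t)\,e(t)+\big(\LV^{\alpha^*}(t)-\tL^{\alpha^*}(t)\big)\tilde z(t),\qquad e(0)=0,
\]
together with $C:=\sup_{t\in[0,T]}\|\LV^{\alpha^*}(t)\|_F<\infty$ and $B:=\sup_{t\in[0,T]}\|z^*(t)\|_2<\infty$ (finite since $z^*$ is continuous on the compact interval $[0,T]$), the bound $\|\tilde z\|\le B+\|e\|$ and Gronwall's inequality give $\sup_{t\in[0,T]}\|z^*(t)-\tilde z(t)\|_2\le\eta\,B\,T\,e^{(C+1)T}=:\eta K$ for $\eta\le1$.

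Finally I would pass from the trajectory bound to the cost: the set $\{(P(z^*(t)),\alpha^*(t)):t\in[0,T]\}$ is relatively compact, and once $\eta K\le1$ the curve $P(\tilde z)$ stays within $\|P\|$ of $P(z^*)$, so on $E$ the arguments of $\ell$ for both trajectories lie in one fixed compact set on which $\ell$ has some Lipschitz constant $L$; hence, on $E$,
\[
\Big|\int_0^T\ell(P(\tilde z(t)),\alpha^*(t))-\ell(P(z^*(t)),\alpha^*(t))\,\mathrm{d}t\Big|\le L\,\|P\|\,T\,\eta K.
\]
Choosing $\eta:=\min\{1,K^{-1},\varepsilon/(L\|P\|TK)\}$ and then $m_0$ from Theorem~\ref{t:coupled_estimate} for this $\eta$ makes the right-hand side at most $\varepsilon$; since $\mathbb{P}(E)\ge1-\delta$ this is the asserted bound, and the hypothesis that the state response of~\eqref{eq:OCP_full} under $\alpha^*$ stays in $\mathbb{X}$ is precisely what makes the generator estimates applicable along the visited region. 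The one non-routine point is the observation that the probabilistic event $E$ does not depend on $t$, which is what lets the pointwise trajectory error be integrated over $[0,T]$; the rest is standard Gronwall/Lipschitz bookkeeping, the only care being to fix the tolerances in the order ``$\varepsilon$, then $\eta$, then $m_0$'' and to keep the Gronwall estimate self-consistent by absorbing $\|\tilde z\|$. So the main obstacle here is organizational rather than conceptual.
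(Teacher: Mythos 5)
The paper states Corollary~\ref{c:control_problem} without any proof, so there is nothing to compare against line by line; your argument is the one the authors evidently intend (take $\tilde\alpha=\alpha^*$, propagate the generator error of Theorem~\ref{t:coupled_estimate} through Gronwall as in Corollary~\ref{c:control_trajectory}, then invoke local Lipschitz continuity of the stage cost on a compact neighbourhood of the optimal trajectory), and it is correct. Two points are worth highlighting. First, your observation that the probabilistic event is $E=\{\|\LV^{e_i}-\tL^{e_i}\|_F\le\varepsilon_i,\ i=0,\dots,n_c\}$, hence independent of $t$, is not mere bookkeeping: Theorem~\ref{t:coupled_estimate} is stated as an essential infimum over $t$ of pointwise probabilities, which by itself would not let you integrate the trajectory error over $[0,T]$; identifying the single uniform-in-$t$ event is exactly what makes the corollary rigorous, and the paper glosses over this. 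Second, you implicitly assume $P$ is linear (you write $\|P\|$); the paper only says $P$ maps observables to states and is "frequently realized" as a coordinate projection, so strictly you should either assume $P$ linear or replace $\|P\|$ by a Lipschitz constant of $P$ on the relevant compact set --- a cosmetic fix. The self-consistent Gronwall step (absorbing $\|\tilde z\|\le B+\|e\|$ to get $\sup_{t\in[0,T]}\|e(t)\|\le\eta BTe^{(C+1)T}$ for $\eta\le 1$) and the order of quantifier choices ($\varepsilon$, then $\eta$, then $m_0$) are handled correctly, and your construction indeed only claims existence of a feasible near-optimal pair for \eqref{eq:OCP_surrogate}, which matches the (deliberately weak) statement of the corollary.
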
}
\section{Numerical examples}\label{sec:examples}

In this section, we first present numerical experiments on the derived error bound for the Koopman generator, and then discuss the implications for optimal control. In particular, we emphasize that the bilinear Koopman model from Section \ref{sec:control} appears to be the best approach for a straightforward transfer of predictive error bounds to the control setting.

\MS{\subsection{Generator Error Bounds: Ornstein-Uhlenbeck Process}
\label{subsec:ExampleErrorBounds}
We begin by investigating the validity and accuracy of the error bounds for the Galerkin matrices of a single SDE system, as derived in Proposition~\ref{thm:prob_error_bounds_xi}. To this end, we consider the one-dimensional reversible Ornstein-Uhlenbeck (OU) process
\begin{equation}
\label{eq:ou_sde}
\mathrm{d}X_t = - X_t \mathrm{d}t + \mathrm{d}W_t.
\end{equation}

\begin{figure}[bht]
\centering
\includegraphics[width=.51\columnwidth]{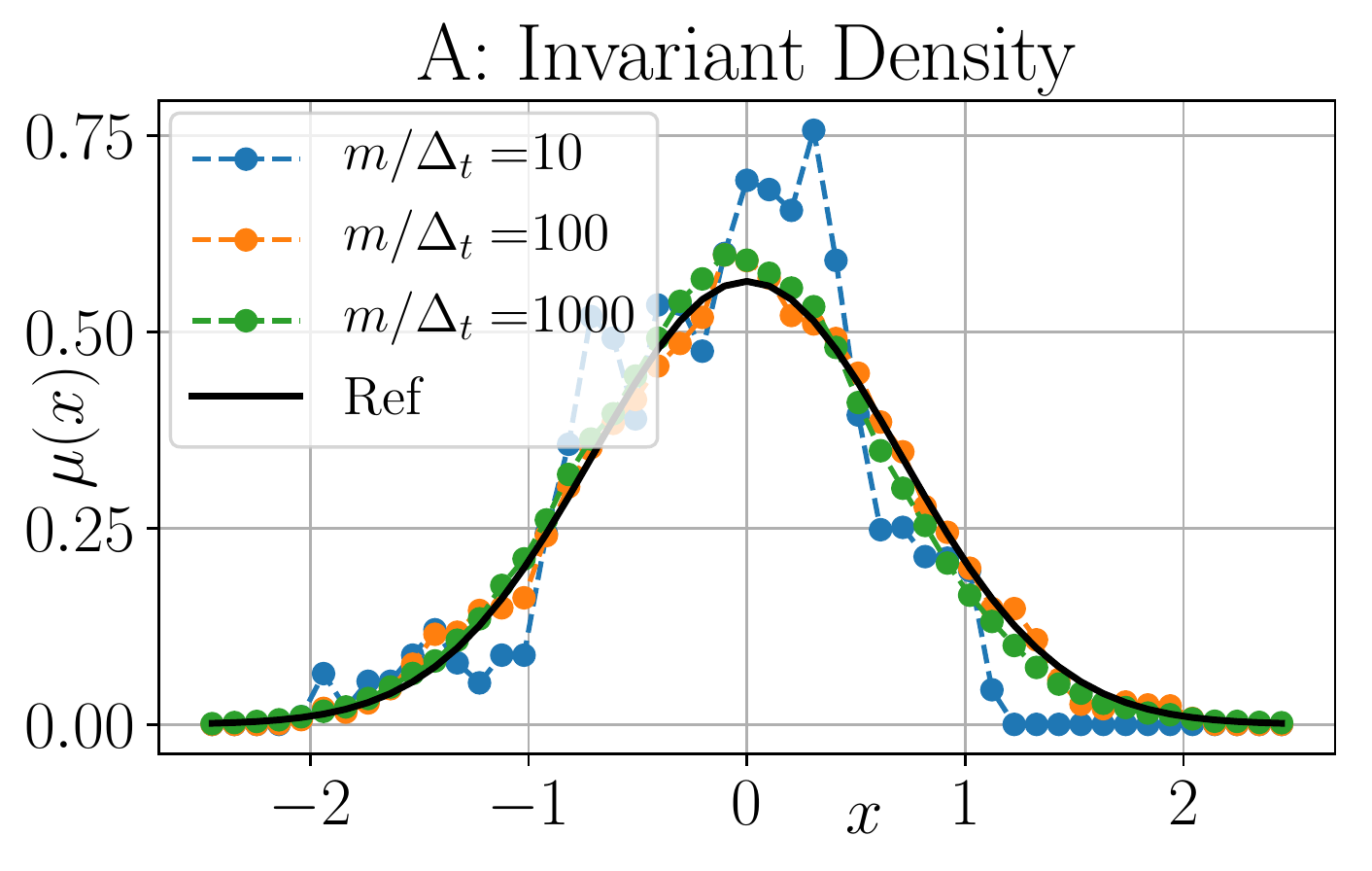}
\includegraphics[width=.49\columnwidth]{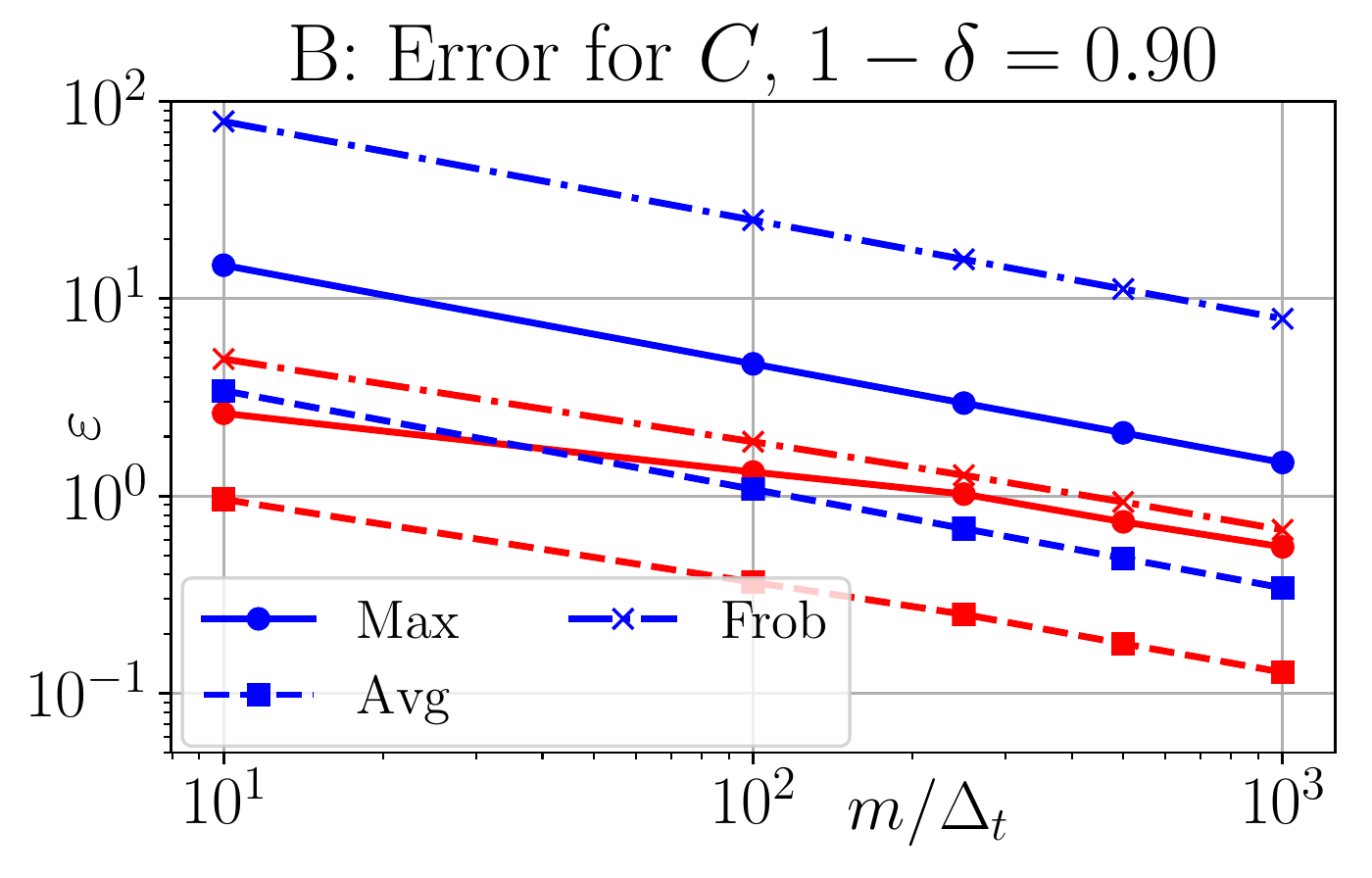}
\includegraphics[width=.49\columnwidth]{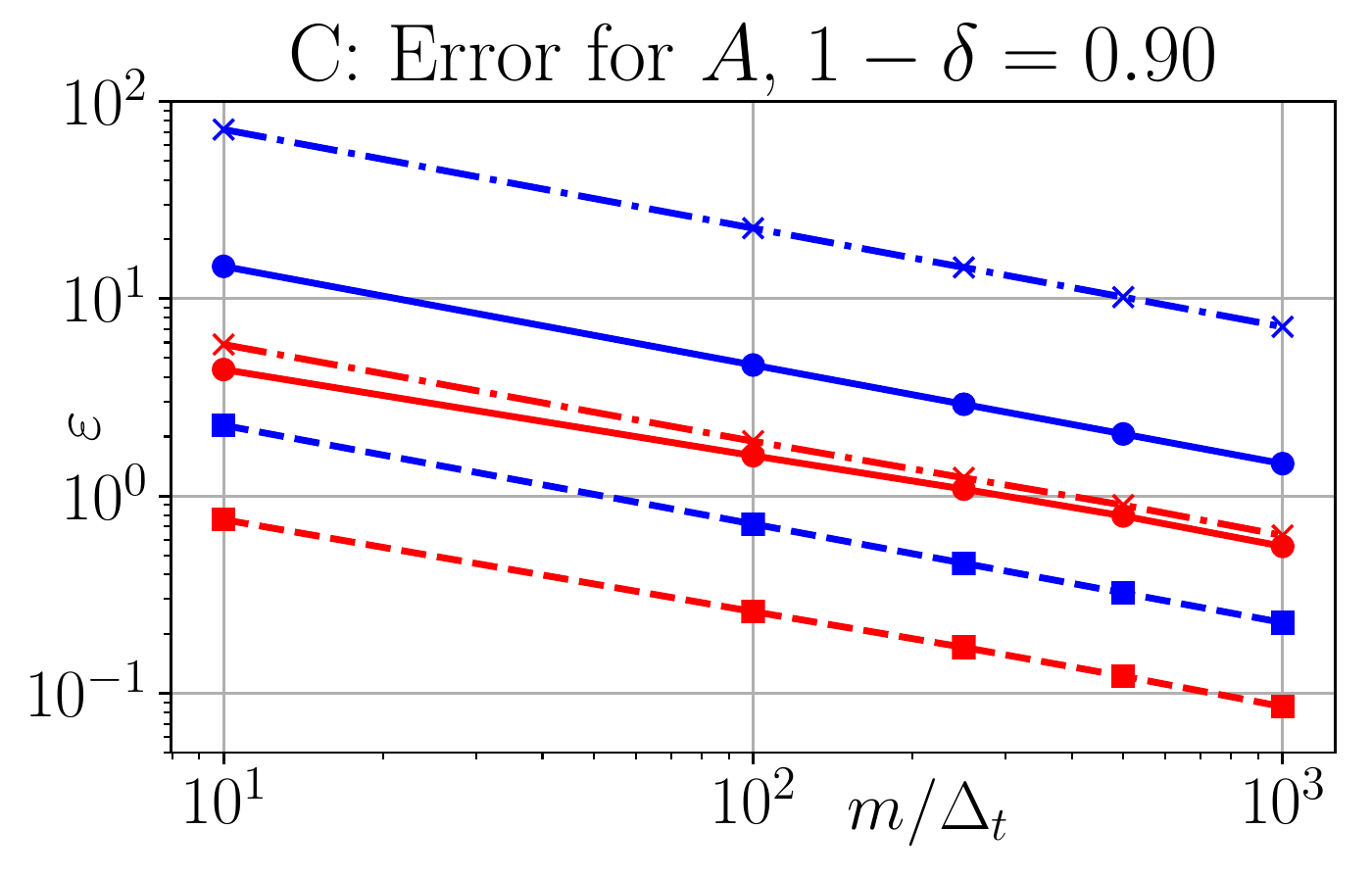}
\caption{\MS{Numerical Results for one-dimensional OU Process~\eqref{eq:ou_sde}. A: Exact invariant density $\mu$ in black, compared to histograms of the first $m$ points of an exemplary trajectory, for various data sizes $m$. B: Error bounds for $C$ corresponding to confidence level $1 - \delta = 0.9$. We show both the theoretical estimates obtained in Proposition~\ref{thm:prob_error_bounds_xi} (blue), as well as the data-based estimates obtained as described in the text (red). We show the maximal error over all entries $C_{ij}$ (dots), the average error over all matrix entries (squares), and the Frobenius norm errors $\|\tilde{C}_m -  C\|_F$. C: The same as B for the matrix $A$. \label{fig:ou_process}}}
\end{figure}
\noindent As the spectrum of the generator $\mathcal{L}$ of the OU process, as well as its invariant density, are known in analytical form, we can exactly calculate the Galerkin matrices $C, \, A$, all variances $\sigma^2_{\Phi_{ij}}$, and asymptotic variances $\sigma^2_{\Phi_{ij}, \infty}$, if we consider a basis set comprised of monomials, see Appendix~\ref{app:ou_analytical}.

We consider monomials of maximal degree four (i.e. $N = 4$), and set the discrete integration time step to $\Delta_t = 10^{-3}$. For a range of different data sizes $m$ and confidence levels $\delta$, we estimate the minimal error $\varepsilon$ that can be achieved with probability $1 - \delta$ for a variety of quantities of interest. We calculate $\varepsilon$ for all individual entries $C_{ij}$ and $A_{ij}$ using inequality~\eqref{eq:error_phi_ij_rev}. Moreover, we also calculate $\varepsilon$ for the Frobenius norm errors in $C$ and $A$ by means of~\eqref{eq:ineq_m_rev}.

In order to compare our bound to the real error, we conduct 500 identical experiments. For each experiment, we generate an independent simulation of the OU process~\eqref{eq:ou_sde}, with initial condition drawn from the invariant distribution. For each trajectory and each of the data sizes $m$ considered, we estimate the matrices $\tilde{C}_m, \, \tilde{A}_m$. We then calculate the absolute entry-wise errors to $C$ and $A$, as well as the Frobenius norm errors $\|\tilde{C}_m - C\|_F$ and $\|\tilde{A}_m - A\|_F$. Finally, we numerically compute the $1 - \delta$-percentile of each of these errors for all confidence levels $\delta$ considered above (i.e., the error $\varepsilon$ below which $450$ of the $500$ repeated experiments lie). These can be directly compared to the probabilistic bounds $\varepsilon$ obtained from our theoretical estimates.

The results are shown in Figure~\ref{fig:ou_process}. We can see in panels B and C that our estimates for individual entries of the Galerkin matrices $C$ and $A$ are quite accurate, as the data-based error is over-estimated by only a factor of two to three. Our estimates for Frobenius norm errors are less accurate, with approximately one order of magnitude difference between theoretical and data-based errors. It can be concluded that the factor  $N$ in~\eqref{eq:ineq_m_rev} is too coarse in this example, as the actual Frobenius norm error only marginally exceeds the maximal entry-wise error. Nevertheless, the qualitative behaviour of all theoretical error bounds is confirmed by the data.}

\subsection{Extension to control systems}
In this section, we illustrate our findings for deterministic as well as stochastic systems regarding prediction and control. We compare the solution of the exact model to the bilinear system 
\begin{equation}\label{eq:discreteBilinearModel}
\begin{aligned}
\widehat{z}(t) &= \psi(P({z}(t))) \\
\dot{z}(t) &= \left[\tL^0 + \sum_{i=1}^{n_c}\alpha_i(t)\left(\tL^{e_i}-\tL^0\right)\right]\widehat{z}(t)\\
z(t_0) &= \psi(x(t_0)),
\end{aligned}
\end{equation}
where $n_c$ is the dimension of the control input $u$, and $P$ is the projection of the lifted state $z$ onto the full state $x \in \mathbb{X}$. Note that the first line, i.e., the \textit{project-and-lift} step is not required if the space $\mathbb{V}$ spanned by the $\{\{\psi_k\}_{k=1}^N\}$ is a \textit{Koopman-invariant subspace} \cite{PBK18}. Moreover, it becomes less and less important the more the dynamics of the $\tL$ are truly restricted to $\mathbb{V}$, or -- alternatively -- if we are not interested in long-term predictions, for instance in the MPC setting.
Besides the bilinear model \eqref{eq:discreteBilinearModel}, we also compare the true solution to the linear model obtained via eDMD with control, see \cite{Proctor2016,KM18a} for details.
Optimality of the computed trajectories from a theoretical standpoint will not be addressed here, as the error bounds for $\tL$ are still too large. However, the principled approach is to choose an $m$ such that Corollary \ref{c:control_problem} holds.

For the numerical discretization, we use eDMD with a finite lag time to obtain a discrete-time version of \eqref{eq:discreteBilinearModel} in case of the Duffing system, which corresponds to an explicit Euler discretization \cite{Peitz2020}. For the Ornstein-Uhlenbeck example, we calculate the generator using gEDMD \cite{Klus2020} and then obtain the resulting discrete-time version by taking the matrix exponential. In the case of eDMD with control, we use the standard algorithm from \cite{KM18a}, which also results in a forward Euler version of the linear system $\dot{z} = \hat A z + \hat B u$, i.e.,
\begin{equation}\label{eq:DMDcModel}
\begin{aligned}
\widehat{z}_{i+1} &= A z_i + B u_i, \\
z_{i+1} &= \psi(P(\widehat{z}_{i+1})), \\
z_0 &= \psi(x(t_0)),
\end{aligned}
\end{equation}
where we have again added the \textit{project-and-lift} step necessary for high prediction accuracy over long time horizons.

\subsubsection{Duffing equation (ODE)}
The first system we study is the Duffing oscillator:
\begin{equation}\label{eq:Duffing_NonlinCon}
\begin{aligned}
\tfrac{\text{d}x}{\text{d}t} = \begin{pmatrix}
x_2 \\ -\delta x_2 - \alpha x_1 - 2\beta x_1^3 u
\end{pmatrix}, \quad x(t_0) = x_0.
\end{aligned}
\end{equation}
with $\alpha = -1$, $\beta = 1$ and $\delta = 0$.
Note that the control does not enter linearly, which is a well-known challenge for DMDc \cite{Peitz2020}.

As the dictionary $\psi$, we choose monomials with varying maximal degrees, and we also include square and cubic roots for comparison. For the data collection process, we simulate the system with constant control inputs $u=0$ and $u=1$ using the standard Runge-Kutta scheme of fourth order with time step $h=0.005$. As the final time, we choose $T = n_{lag} h$ seconds, where $n_{lag}$ is the integer number of time steps we step forward by the discrete-time Koopman operator model. We perform experiments for both $n_{lag}=1$ and $n_{lag}=10$. Each trajectory yields one tuple $(x,y) = (x(0), x(T))$, and we sample various numbers $m$ of data points with uniformly distributed random initial conditions over the rectangle $[-1.5,1,5]^2$.
\begin{figure}[h!]
\centering
\includegraphics[width=.7\columnwidth]{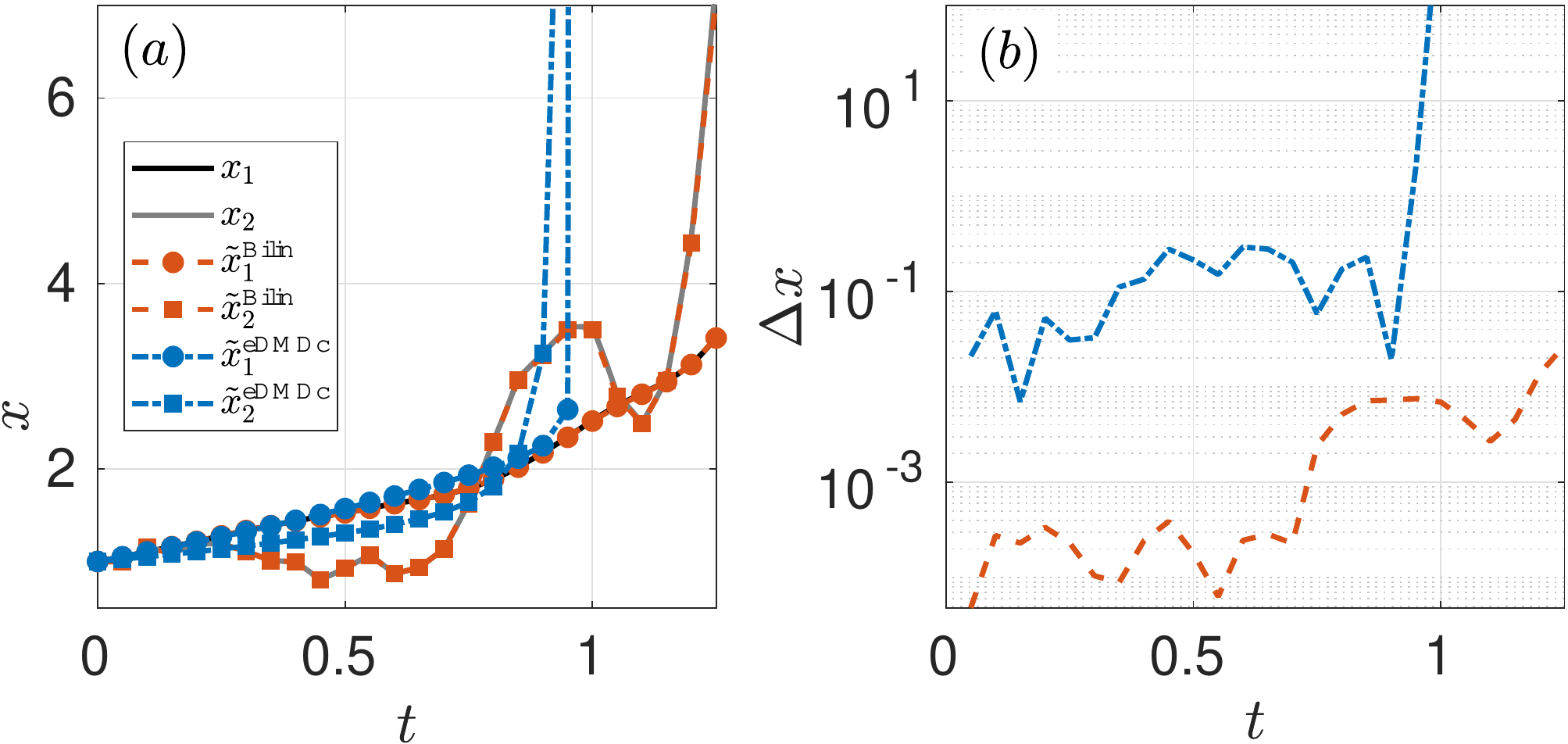}
\caption{Comparison of ODE solution, the bilinear surrogate model and the linear model obtained via eDMDc for the system \eqref{eq:Duffing_NonlinCon} for a random control input with $u(t) \in[-1,1]$.}
\label{fig:Duffing_Prediction}
\end{figure}

Fig.\ \ref{fig:Duffing_Prediction} shows the prediction accuracy for $m=100$ and $n_{lag}=10$, where excellent agreement is observed for the bilinear surrogate model. In particular the relative error
\[
\Delta x(t) = \frac{\|x(t) - \tilde{x}(t)\|_2}{\|x(t)\|_2},
\]
where $\tilde{x}(t)= P(z(t))$ is the solution obtained via the surrogate model, is below 0.1 percent for almost 3 seconds, whereas the eDMDc approach has a large error of $\approx 10\%$ from the start and becomes unstable within the first second. 

To study the influence of the size of the training data set, Fig.\ \ref{fig:Duffing_Error} shows boxplots of the one-step prediction accuracy for various $m$. Each boxplot was obtained by performing 20 trainings of a bilinear system according to the procedure described above. After each training, a single time step was made with $1000$ uniformly drawn random initial conditions $x_0 \in [-1.5,1,5]^2$ control inputs $u \in [0,1]$, both. Consequently, each boxplot consists of $2\cdot 10^4$ data points. We see that, as expected, the training error decreases for larger $m$. However, what is really surprising is that a saturation can be observed already at $m=30$ for an ODE system. Beyond that, no further improvement can be seen, which demonstrates the advantage of (i) the linearity of the Koopman approach and (ii) the usage of autonomous systems for the model reduction process.
\begin{figure}[thb]
\centering
\includegraphics[width=1\columnwidth]{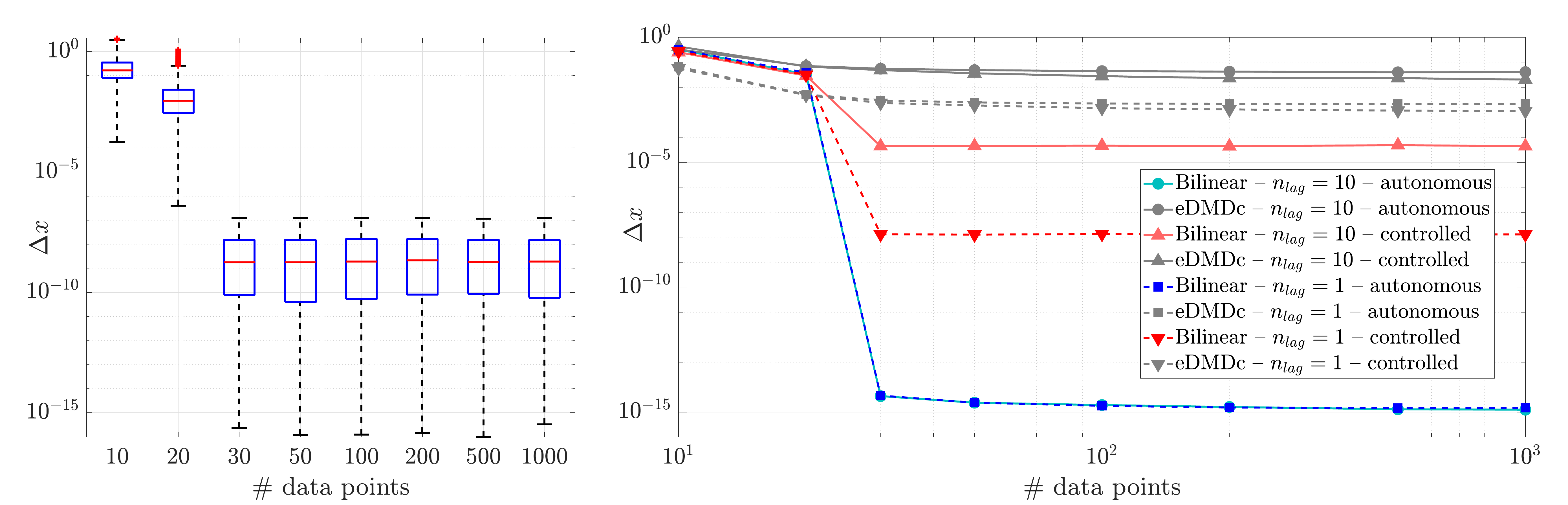}
\caption{\MS{Left: Boxplot of the relative one-step prediction error over 20 training runs and $1000$ different samples $(x_0,u)$ in each run for a dictionary of monomials up to degree at most five and $n_{lag}=1$. Right: The influence of the lag time as well as the control input on the mean accuracy (the dashed line with triangle symbols corresponds to the left plot). We see that the lag time plays an important role in the control setting.}}
\label{fig:Duffing_Error}
\end{figure}

Interestingly, the lag time between two consecutive data points has a critical impact on the maximal accuracy in the control case. This is due to the fact that the bilinear surrogate model is only exact for the Koopman generator \cite{Peitz2020}. For a finite lag time, the bilinear model is a first order approximation such that smaller lag times are advantageous. Nevertheless, the accuracy still significantly supersedes the eDMDc approach.

Another interesting observation can be made with respect to the choice of the dictionary $\psi$. Fig.\ \ref{fig:Duffing_Error_Means} shows a comparison of the mean errors (analogous to the red bars in Fig.\ \ref{fig:Duffing_Error} for various dictionaries. We observe excellent performance for monomials with degree three or larger. The addition of roots of $x$ is not beneficial at all, and in particular, smaller dictionaries are favorable in terms of the data requirements, which is in agreement with our error analysis and which was also reported in \cite{PK20}. 
\begin{figure}[thb]
\centering
\includegraphics[width=.7\columnwidth]{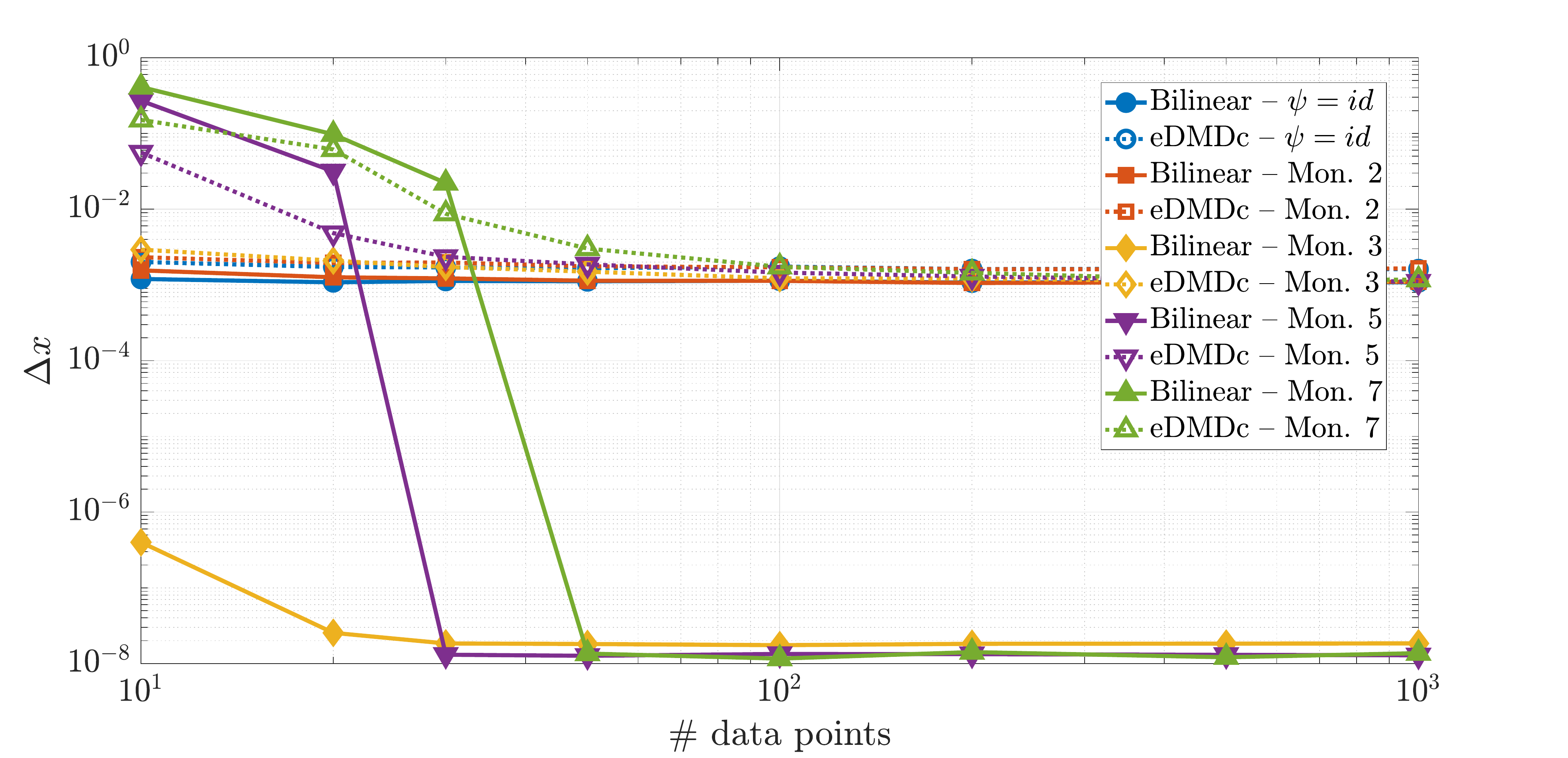}
\caption{\MS{Mean relative one-step prediction errors for various dictionaries and data set sizes $m$.}}
\label{fig:Duffing_Error_Means}
\end{figure}

\noindent Next, we study the stabilization of the system \eqref{eq:Duffing_NonlinCon} for the final time $T=5$. Using the time discretization as above and a straight-forward single-shooting method, this yields a 100-dimensional optimization problem similar to Problem \eqref{eq:OCP_surrogate} from Corollary \ref{c:control_problem}:
\begin{equation}\label{eq:OCP}
\begin{aligned}
\min_{u} \int_{0}^5 &\|P (z(t)) - x^{\mathrm{ref}}(t)\|^2 \\
\mbox{s.t.} \qquad &\eqref{eq:discreteBilinearModel}
\end{aligned}
\end{equation}
where $x^{\mathrm{ref}}$ is the reference trajectory to be tracked. Fig.\ \ref{fig:Duffing_Control} demonstrates the performance for $x^{\mathrm{ref}}=0$ with models that were obtained using only $m=25$ training samples for each of the Koopman approximations, where almost perfect agreement with the solution using the full system is achieved. In contrast, the eDMDc approximation fails for System \eqref{eq:Duffing_NonlinCon}, even when initializing with the optimal solution from the full system.

\begin{figure}[thb]
\centering
\includegraphics[width=.7\columnwidth]{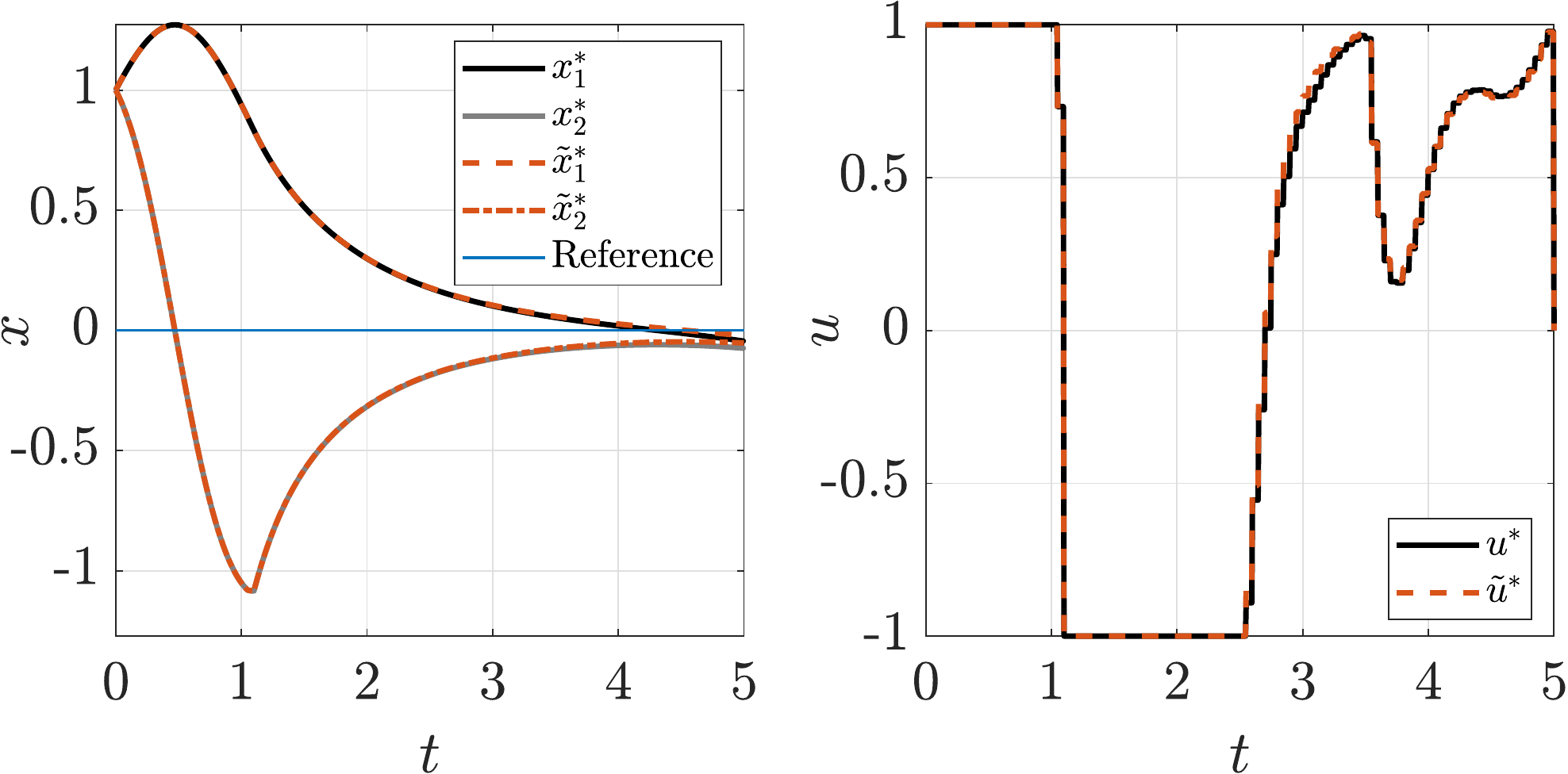}
\caption{Control performance using the true ODE model (black) and the bilinear surrogate model (orange). The results are almost indistinguishable, whereas eDMDc fails.}
\label{fig:Duffing_Control}
\end{figure}

\subsubsection{Ornstein-Uhlenbeck process (SDE)}
For the stochastic setting, we consider an Ornstein-Uhlenbeck process with a control input:
\begin{equation}\label{eq:OU_NonlinCon}
\mathrm{d}X_t = -\alpha (u X_t) \mathrm{d}t + \sqrt{2 \beta^{-1}} \mathrm{d}W_t.
\end{equation}
with $\alpha = 1$, $\beta = 2$ and $u(t) \in [0,1]$. 
The system is simulated numerically using an Euler-Maruyama integration scheme with a time step of $10^{-3}$ as in Section \ref{subsec:ExampleErrorBounds}.
For both systems, we calculate the Koopman operator corresponding to $u=0$ and $u=1$, respectively, using the gEDMD procedure presented in \cite{Klus2020} with monomials up to degree five. We then calculate the corresponding Koopman operators for the time step $h=0.05$ using the matrix exponential.
\begin{figure}
\centering
\includegraphics[width=.65\columnwidth]{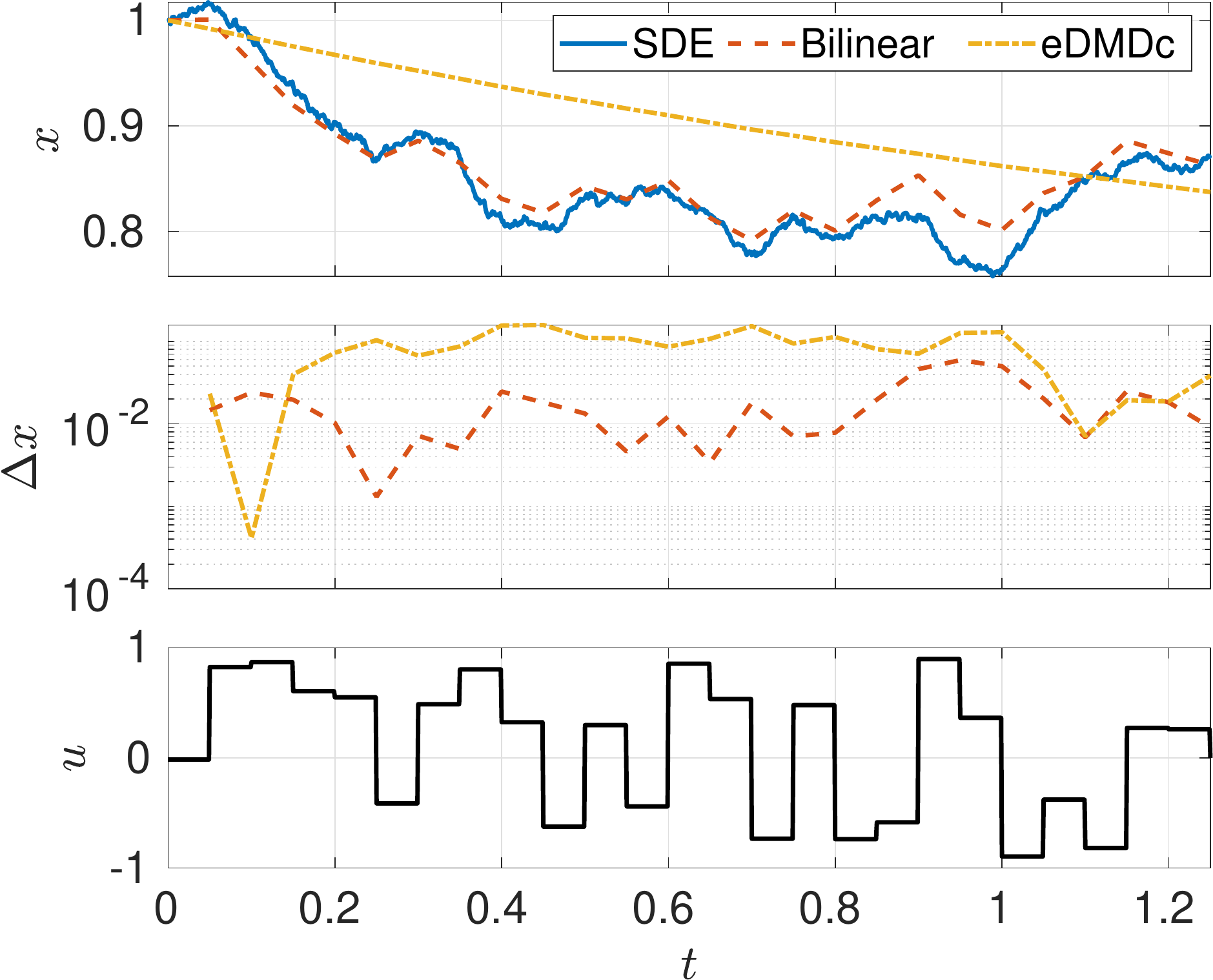}
\caption{Prediction accuracy for the expected value of the Ornstein-Uhlenbeck process (approximated by averaging over 100 simulations) of the bilinear system and eDMDc, respectively.}
\label{fig:OU_Prediction}
\end{figure}

To study the prediction performance (cf.\ Fig.\ \ref{fig:OU_Prediction}), we proceed in the same way as for the Duffing system, except that we now compare the expected values, approximated by averaging over 100 SDE simulations. The results are very similar to the deterministic case, where the performance of both surrogate modeling techniques is comparable when the control enters linearly, and very poor for eDMDc otherwise. Even though the Ornstein-Uhlenbeck process is stochastic, the linearity is highly favorable for the data requirements. We do not observe any considerable deterioration even in the very low data limit.

Finally, in the control setting, we aim at tracking the expected value $\mathbb{E}[X_t]$, which is precisely the quantity that is predicted by the Koopman operator. Thus, Problem \eqref{eq:OCP} can directly be applied to SDEs as well. In order to compare the results to the full system, we average over 20 simulations in the evaluation of the objective function value when using the SDE. However, this appears to be insufficient, as the performance is inadequate, cf.\ Fig.~\ref{fig:OU_Control}. The bilinear surrogate model, on the other hand, shows very good performance with a small amount of $m=100$ training data points. 
\begin{figure}[htb]
\centering
\includegraphics[width=.65\columnwidth]{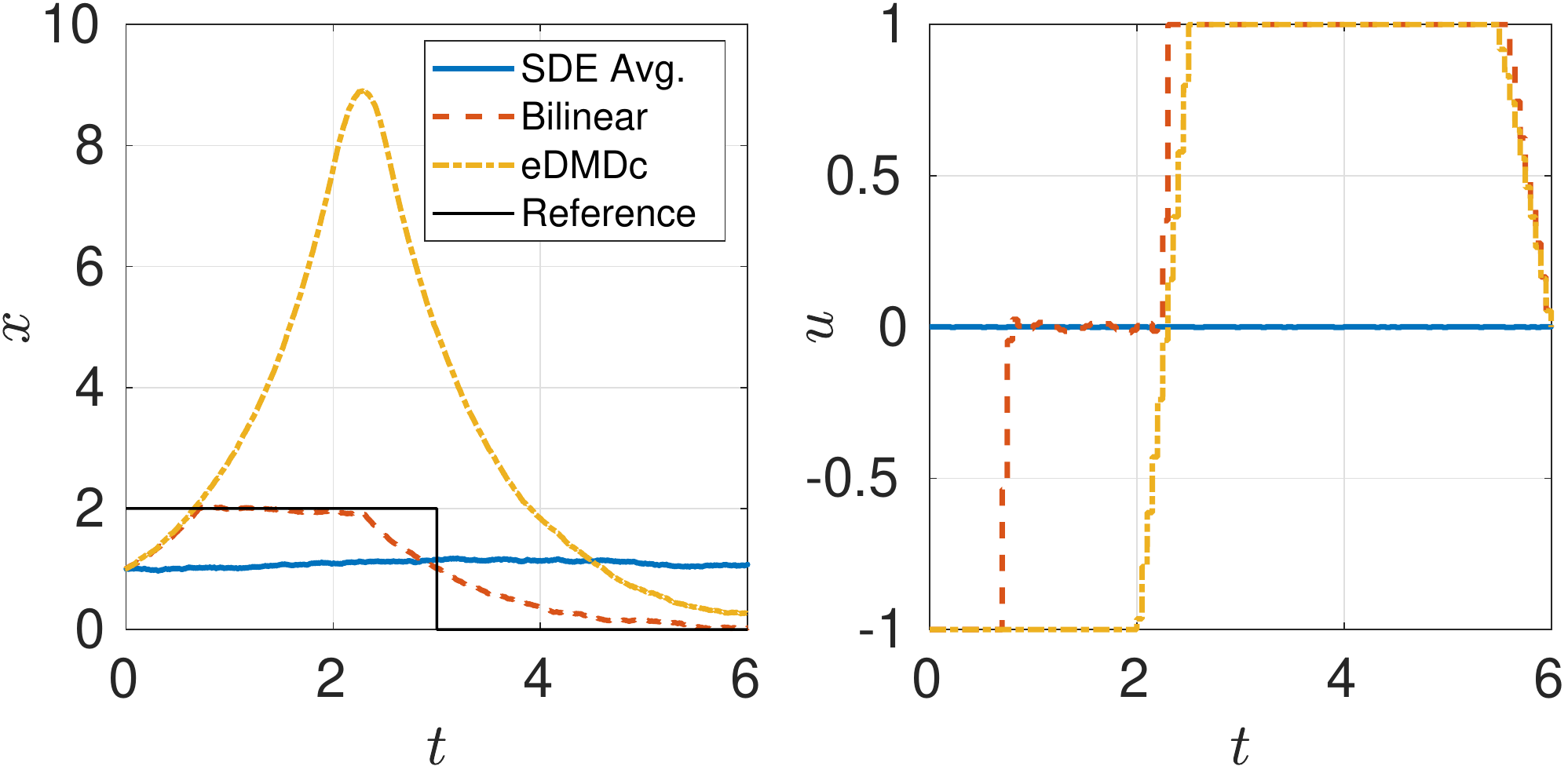}
\caption{Control of the expected value of the Ornstein-Uhlenbeck process (approximated by averaging over 100 simulations using the optimal control input shown in the bottom plots). In the SDE-based control, we have used 20 simulations in each objective function evaluation.}
\label{fig:OU_Control}
\end{figure}

\section{Conclusions} 
\label{sec:conclusion}

\noindent
We presented the first rigorously derived probabilistic bounds on the finite-data approximation error for the Koopman generator of SDEs and nonlinear control systems. Furthermore, by using slightly more advanced techniques from probability theory, we also relaxed the assumption of i.i.d. data invoked in~\cite{ZhanZuaz21} in the ODE setting. Moreover, we also provided an analysis for the error propagation to estimate the prediction accuracy in terms of the data size. A novelty for SDEs and in the control setting is that our bounds explicitly depend on the number of data points (and not only in the infinite-data limit). Further, the proposed techniques provide the theoretical foundation for the Koopman-based approach~\cite{Peitz2020} to control-affine systems, which seems to be superior for control and particularly well-suited for MPC, since it avoids the curse of dimensionality w.r.t.\ the control dimension. 

\bibliographystyle{abbrv}
\bibliography{references}

\appendix
\section{Appendix}
\subsection{Norm of the isomorphism $\mathbb{V}\simeq \mathbb{R}^n$}\label{s:isomorphism}

\begin{proposition}\label{p:normeq}
Let $\mathbb{V} = \operatorname{span}\{\{\psi_j\}_{j=1}^N\}\subset \Lmu$, $\mathcal{B} \in L(\mathbb{V},\mathbb{V})$ and $B\in \mathbb{R}^{n\times n}$ be its corresponding matrix representation.
Then
$$
\sqrt{\tfrac{\lambda^{\min}(C)}{\lambda^{\max}(C)}}\|B\|_2 \leq \|\mathcal{B}\|_{L(\mathbb V,\mathbb V)} \leq \sqrt{\tfrac{\lambda^{\max}(C)}{\lambda^{\min}(C)}} \|B\|_2 
$$
where $C_{i,j} = \langle \psi_i,\psi_j\rangle_{\Lmu}$.
\end{proposition}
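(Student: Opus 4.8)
The plan is to pass to coordinates via the linear isomorphism $T\colon\mathbb{R}^N\to\mathbb{V}$, $Tc := \sum_{j=1}^N c_j\psi_j$, which is bijective precisely because $\psi_1,\ldots,\psi_N$ are linearly independent. The crucial elementary fact is that for every $c\in\mathbb{R}^N$
\[
\|Tc\|_{\Lmu}^2 = \sum_{i,j=1}^N c_ic_j\langle\psi_i,\psi_j\rangle_{\Lmu} = c^\top C c,
\]
and since $C$ is the Gram matrix of a linearly independent family it is symmetric and positive definite, so that $\lambda^{\min}(C)>0$ and
\[
\sqrt{\lambda^{\min}(C)}\,\|c\|_2 \;\le\; \|Tc\|_{\Lmu} \;\le\; \sqrt{\lambda^{\max}(C)}\,\|c\|_2 .
\]
I would record this two-sided norm equivalence between $(\mathbb{R}^N,\|\cdot\|_2)$ and $(\mathbb{V},\|\cdot\|_{\Lmu})$ as the first step; in particular the quotients $\lambda^{\min}(C)/\lambda^{\max}(C)$ appearing in the claim are well defined.

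By definition of the matrix representation we have $\mathcal{B}\iso T = T\iso B$, i.e.\ $\mathcal{B} = TBT^{-1}$ as operators on $\mathbb{V}\simeq\mathbb{R}^N$. For the upper bound I take an arbitrary $v = Tc\in\mathbb{V}\setminus\{0\}$ and combine the two estimates above: $\|\mathcal{B}v\|_{\Lmu} = \|TBc\|_{\Lmu}\le\sqrt{\lambda^{\max}(C)}\,\|Bc\|_2\le\sqrt{\lambda^{\max}(C)}\,\|B\|_2\,\|c\|_2$, while $\|v\|_{\Lmu}=\|Tc\|_{\Lmu}\ge\sqrt{\lambda^{\min}(C)}\,\|c\|_2$. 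Dividing and taking the supremum over $v\in\mathbb{V}\setminus\{0\}$ yields
\[
\|\mathcal{B}\|_{L(\mathbb{V},\mathbb{V})} \;\le\; \sqrt{\tfrac{\lambda^{\max}(C)}{\lambda^{\min}(C)}}\,\|B\|_2 .
\]

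For the lower bound I instead pick a maximizing vector $c^\ast\in\mathbb{R}^N$ with $\|c^\ast\|_2 = 1$ and $\|Bc^\ast\|_2 = \|B\|_2$, and set $v^\ast := Tc^\ast$, which is nonzero since $T$ is injective. Using the reverse inequalities, $\|\mathcal{B}v^\ast\|_{\Lmu} = \|TBc^\ast\|_{\Lmu}\ge\sqrt{\lambda^{\min}(C)}\,\|Bc^\ast\|_2 = \sqrt{\lambda^{\min}(C)}\,\|B\|_2$ and $\|v^\ast\|_{\Lmu}\le\sqrt{\lambda^{\max}(C)}$, so that
\[
\|\mathcal{B}\|_{L(\mathbb{V},\mathbb{V})} \;\ge\; \frac{\|\mathcal{B}v^\ast\|_{\Lmu}}{\|v^\ast\|_{\Lmu}} \;\ge\; \sqrt{\tfrac{\lambda^{\min}(C)}{\lambda^{\max}(C)}}\,\|B\|_2 ,
\]
which is the remaining estimate. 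I do not anticipate any genuine obstacle; the only points deserving a word of care are the positive definiteness of $C$ (immediate from linear independence of the $\psi_j$) and the minor notational inconsistency between $n$ and $N$ in the statement, both of which denote $\dim\mathbb{V}$.
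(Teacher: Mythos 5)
Your proof is correct and follows essentially the same route as the paper's: both rest on the identity $\|\sum_j c_j\psi_j\|_{\Lmu}^2 = c^\top C c$, which gives the two-sided norm equivalence between $(\mathbb{R}^N,\|\cdot\|_2)$ and $(\mathbb{V},\|\cdot\|_{\Lmu})$ and hence the equivalence of operator norms. You merely spell out the details (including the extremal vector for the lower bound) that the paper leaves implicit, and your remark on the $n$ versus $N$ notation is apt.
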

\begin{proof}
This follows from the identity 
$$
\left\| \sum_{i=1}^{n} \alpha_i \psi_i\right\|_{\Lmu}^2 = \sum_{i,j=1}^N \alpha_i \alpha_j \langle \psi_i,\psi_j\rangle_{\Lmu} = \alpha^\top C\alpha,
$$
which shows the equivalence of the vector norms. This induces the desired equivalence of the operator norms.
\end{proof}

\subsection{A technical lemma}\label{ss:technical}
\begin{lemma}
\label{lem:probabilities}
Let $A_i$, $i=1,\ldots,d$, be measurable sets. Then
\begin{align*}
\mathbb{P}\left(\bigcap_{i=1}^d A_i \right) = \sum_{i=1}^d \mathbb{P}(A_i) - \sum_{i=1}^{d-1} \mathbb{P}\left(A_i \cup \bigcap_{j=i+1}^d A_j\right).
\end{align*}
Moreover, if $\mathbb{P}\left(A_i\right) \geq 1-\delta$ for all $i=1,\ldots,d$, then 
\begin{align*}
\mathbb{P}\left(\bigcap_{i=1}^d A_i \right)  \geq 1-d\delta.
\end{align*}
\end{lemma}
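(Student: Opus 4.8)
The statement is a purely measure-theoretic fact about finite intersections of high-probability events, so the plan is to prove the identity first by induction on $d$ and then derive the inequality from it by a trivial estimate (or, alternatively, derive the inequality directly via the union bound on complements, which is the shortest route). For the identity, I would proceed by induction on $d$. The base case $d=1$ reads $\mathbb{P}(A_1) = \mathbb{P}(A_1)$ with the empty sum on the right contributing nothing, which holds. For the inductive step, write $\bigcap_{i=1}^{d} A_i = A_1 \cap B$ where $B := \bigcap_{j=2}^{d} A_j$, and use the inclusion–exclusion relation for two sets, $\mathbb{P}(A_1 \cap B) = \mathbb{P}(A_1) + \mathbb{P}(B) - \mathbb{P}(A_1 \cup B)$. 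Then apply the induction hypothesis to $\mathbb{P}(B) = \mathbb{P}\big(\bigcap_{j=2}^d A_j\big)$, which expands as $\sum_{i=2}^d \mathbb{P}(A_i) - \sum_{i=2}^{d-1}\mathbb{P}\big(A_i \cup \bigcap_{j=i+1}^d A_j\big)$, and observe that the term $\mathbb{P}(A_1 \cup B) = \mathbb{P}\big(A_1 \cup \bigcap_{j=2}^d A_j\big)$ is exactly the missing $i=1$ summand of the second sum. Collecting terms gives $\sum_{i=1}^d \mathbb{P}(A_i) - \sum_{i=1}^{d-1}\mathbb{P}\big(A_i \cup \bigcap_{j=i+1}^d A_j\big)$, as claimed. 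One should be slightly careful with the edge case $d=2$, where the inner second sum on the right of the statement is just the single term $\mathbb{P}(A_1 \cup A_2)$ and the induction hypothesis is applied with $d-1 = 1$, so that sum is empty there; this works out consistently.

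For the inequality, the cleanest approach is to bypass the identity altogether and argue on complements: $\mathbb{P}\big(\bigcap_{i=1}^d A_i\big) = 1 - \mathbb{P}\big(\bigcup_{i=1}^d A_i^c\big) \ge 1 - \sum_{i=1}^d \mathbb{P}(A_i^c) \ge 1 - \sum_{i=1}^d \delta = 1 - d\delta$, using subadditivity of $\mathbb{P}$ and $\mathbb{P}(A_i^c) = 1 - \mathbb{P}(A_i) \le \delta$. If one instead wishes to deduce it from the identity, note that $\mathbb{P}\big(A_i \cup \bigcap_{j=i+1}^d A_j\big) \le 1$ for each $i$, so the identity gives $\mathbb{P}\big(\bigcap_{i=1}^d A_i\big) \ge \sum_{i=1}^d \mathbb{P}(A_i) - (d-1) \ge d(1-\delta) - (d-1) = 1 - d\delta$; this is in fact a slightly sharper route since it uses all $d$ lower bounds but only $d-1$ crude upper bounds.

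There is no real obstacle here: the only thing to watch is the bookkeeping of index ranges in the inductive step (making sure the term produced by two-set inclusion–exclusion slots precisely into position $i=1$ of the shifted sum, and handling $d=2$ where one of the sums is empty). Everything else is a one-line application of finite additivity and subadditivity of a probability measure.
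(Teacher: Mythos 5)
Your proof is correct and follows essentially the same route as the paper: the identity is obtained by induction using the two-set inclusion--exclusion formula applied to $A_1\cap\bigcap_{i=2}^d A_i$, and the inequality follows from the identity by bounding each union probability by $1$, exactly as in the paper's proof. Your alternative derivation of the inequality via the union bound on complements is a valid (and equally standard) shortcut, but it adds nothing beyond what the paper's argument already delivers.
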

\begin{proof}
Inductively applying the classical formula
\begin{align*}
\mathbb{P}\left(A_1\cap A_2\right) = \mathbb{P}(A_1) + \mathbb{P}(A_2) - \mathbb{P}(A_1\cup A_2)
\end{align*}
yields
\begin{align*}
\mathbb{P}\left(\bigcap_{i=1}^d A_i \right) &= \mathbb{P}\left(A_1 \cap \bigcap_{i=2}^d A_i\right) = \mathbb{P}(A_1)  + \mathbb{P}\left(\bigcap_{i=2}^d A_i\right) - \mathbb{P}\left(A_1 \cup \bigcap_{i=2}^d A_i\right)\\
&=\sum_{i=1}^d \mathbb{P}\left(A_i\right) - \sum_{i=2}^{d-1}\mathbb{P}\left(A_i \cup \bigcap_{j=i+1}^dA_j\right) - \mathbb{P}\left(A_1 \cup \bigcap_{i=2}^d A_i\right)\\
&= \sum_{i=1}^d \mathbb{P}\left(A_i\right) - \sum_{i=1}^{d-1}\mathbb{P}\left(A_i \cup \bigcap_{j=i+1}^dA_j\right),
\end{align*}
which proves the first claim. The second claim follows by estimating the first sum by $d(1-\delta)$ from below, and the second sum by $-(d-1)$ from below.
\end{proof}

\subsection{Proof of the error bound on the trajectories}\label{s:errorbound_traj}
\begin{lemma}
\label{lem:gronwall}
Let $z$ and $\tilde{z}$ solve \eqref{e:z} and \eqref{e:tz} respectively. Then for all $t\geq 0$
\begin{align*}
\|z(t)&-\tilde{z}(t)\|_2 
\leq \|\tL - \LV\|_2\|\tilde z\|_{L^1(0,t;\mathbb{R}^N)}e^{t \|\LV\|_2}
\end{align*}
\end{lemma}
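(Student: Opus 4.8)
The plan is to derive a differential inequality for the error $e(t) := z(t) - \tilde z(t)$ and then apply Gronwall's lemma. First I would write the integral form of both equations: $z(t) = z_0 + \int_0^t \LV z(s)\,\mathrm{d}s$ and $\tilde z(t) = z_0 + \int_0^t \tL \tilde z(s)\,\mathrm{d}s$. Subtracting, the initial data cancel and
\[
e(t) = \int_0^t \big(\LV z(s) - \tL \tilde z(s)\big)\,\mathrm{d}s
= \int_0^t \Big(\LV e(s) + (\LV - \tL)\tilde z(s)\Big)\,\mathrm{d}s,
\]
where I have added and subtracted $\LV \tilde z(s)$.

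Taking Euclidean norms and using submultiplicativity of the induced matrix norm gives
\[
\|e(t)\|_2 \le \int_0^t \|\LV\|_2\,\|e(s)\|_2\,\mathrm{d}s + \int_0^t \|\LV - \tL\|_2\,\|\tilde z(s)\|_2\,\mathrm{d}s
\le \|\LV - \tL\|_2 \,\|\tilde z\|_{L^1(0,t;\mathbb{R}^N)} + \|\LV\|_2 \int_0^t \|e(s)\|_2\,\mathrm{d}s.
\]
Here I use that the forcing integral $\int_0^t \|\LV - \tL\|_2 \|\tilde z(s)\|_2\,\mathrm{d}s$ is bounded by $\|\LV - \tL\|_2 \int_0^t \|\tilde z(s)\|_2\,\mathrm{d}s = \|\LV - \tL\|_2 \|\tilde z\|_{L^1(0,t;\mathbb{R}^N)}$, and that this quantity is non-decreasing in $t$, so it serves as the constant term in Gronwall's inequality on $[0,t]$. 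Applying the integral form of Gronwall's lemma with constant $a = \|\LV - \tL\|_2 \|\tilde z\|_{L^1(0,t;\mathbb{R}^N)}$ and rate $b = \|\LV\|_2$ yields
\[
\|e(t)\|_2 \le \|\LV - \tL\|_2\,\|\tilde z\|_{L^1(0,t;\mathbb{R}^N)}\, e^{t\|\LV\|_2},
\]
which is the claim.

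I do not anticipate a genuine obstacle here, as this is a standard perturbation estimate for linear ODEs; the only point requiring a little care is the choice of which trajectory ($z$ or $\tilde z$) to treat as the "perturbed forcing" term — expanding around $\LV$ (rather than $\tL$) is what produces the factor $e^{t\|\LV\|_2}$ with the \emph{true} projected generator in the exponent, matching the statement. One should also note that $\tilde z \in L^1(0,t;\mathbb{R}^N)$ automatically since $\tilde z$ is continuous (indeed smooth) on the compact interval $[0,t]$, so the right-hand side is finite. Alternatively, one could invoke the variation-of-constants formula $z(t) - \tilde z(t) = \int_0^t e^{(t-s)\LV}(\LV - \tL)\tilde z(s)\,\mathrm{d}s$ together with $\|e^{(t-s)\LV}\|_2 \le e^{(t-s)\|\LV\|_2} \le e^{t\|\LV\|_2}$ to reach the same bound directly, bypassing Gronwall entirely; I would mention this as the cleaner route.
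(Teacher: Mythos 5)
Your proof is correct and follows essentially the same route as the paper's: the same decomposition $e(t)=\int_0^t\big(\LV e(s)-(\tL-\LV)\tilde z(s)\big)\,\mathrm{d}s$ followed by the integral form of Gronwall's inequality with the $L^1$ norm of $\tilde z$ as the (non-decreasing) constant term. The variation-of-constants alternative you mention is exactly what the paper uses later, in Corollary~\ref{c:refinements}, to refine the bound under stability assumptions on the semigroup generated by $\LV$.
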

\begin{proof}
Denoting $e = z-\tilde{z}$, subtracting \eqref{e:tz} from \eqref{e:z} and integrating over a time interval $[0,t]$ with $t\geq 0$ we obtain that
\begin{align*}
e(t) &= \int_0^t \LV z(s) - \tL \tilde{z}(s)\,\text{d}s\\
&= \int_0^t \LV e(s) - \left(\tL - \LV\right)\tilde{z}(s)\,\text{d}s 
\end{align*}
This implies using Gronwalls inequality, cf.\ \cite[Theorem 2.1]{Chicone2006}, that
\begin{align*}
\|e(t)\|_2 &\leq \int_0^t \|\LV\|_2 \|e(s)\|_2 + \|\tL - \LV\|_2 \|\tilde{z}(s)\|_2\,\text{d}s\\
&\leq e^{t \|\LV\|_2} \int_0^t\|\tL - \LV\|_2\|\tilde z(s)\|_2\,ds\\
&= e^{t \|\LV\|_2} \|\tL - \LV\|_2\|\tilde z\|_{L^1(0,t;\mathbb{R}^N)}.
\end{align*}
\end{proof}

\begin{myproof}[Corollary~\ref{c:trajest}]
Using the bound of Lemma~\ref{lem:gronwall} we obtain
\begin{align*}
\|z(t)-\tilde{z}(t)\|_2 &\leq \|\tL - \LV\|_2 t e^{t\|\tL\|_2} e^{t\|\LV\|_2} \\&= t \|\tL-\LV\|_2 e^{t\left(\|\LV\|_2 + \|\tL\|_2\right)}.
\end{align*}
We compute
\begin{align*}
\mathbb{P}\left(\|z(t)-\right.&\left.\tilde{z}(t)\|_2 \leq \varepsilon\right) \\&\geq \mathbb{P}\left(t \|\tL-\LV\|_2 e^{t\|\LV\|_2} e^{t\|\tL\|_2}\|z_0\|\leq \varepsilon\right)\\
&\geq  \mathbb{P}\left(t \|\tL-\LV\|_2 e^{2t\|\LV\|_2} e^{t\|\tL-\LV\|_2}\|z_0\|\leq \varepsilon\right)
\\&\geq \mathbb{P}\left(T \|\tL-\LV\|_2 e^{2T\|\LV\|_2} e^{T\|\tL-\LV\|_2}\|z_0\|\leq \varepsilon\right)
\end{align*}
By Theorem~\ref{t:generatorestimate} and $\|\cdot\|_2\leq \|\cdot\|_F$, for any $\tilde{\varepsilon}$ we can choose $m_0$ such that $\mathbb{P}\left( \|\tL - \LV\|_2 \leq \tilde{\varepsilon}\right) \geq 1-\delta$. Hence, there is $m_0$ only depending on $T$, $z_0$, $\LV$ and $\varepsilon$ such that for any $t\geq 0$
$$\mathbb{P}\left(\|z(t)-\tilde{z}(t)\|_2 \leq \varepsilon\right) \geq 1-\delta. $$
Taking the minimum over all $t\in [0,T]$ proves the claim.
\end{myproof}

\begin{myproof}[Corollary~\ref{c:control_trajectory}]
This proof follows with obvious modifications in the proof of Corollary~\ref{lem:gronwall} using the bound on then error of the time dependent generators of Theorem~\ref{t:coupled_estimate}.
\end{myproof}

\begin{corollary}\label{c:refinements}
If the Koopman semigroup generated by $\LV$ is bounded by $M$, then 
\begin{align*}
\|\tilde{z}(t)-z(t)\|_2 \leq M\|\tL - \LV\|_2 \|\tilde{z}\|_{L^1(0,t;\mathbb{R}^N)}.
\end{align*}
If it is exponentially stable then
\begin{align*}
\|\tilde{z}(t)-z(t)\|_2 \leq M   c\|\tL - \LV\|_2 \|\tilde{z}\|_{L^p(0,t;\mathbb{R}^N)}
\end{align*}
for any $1\leq p\leq \infty$ with $M\geq 1$ and $c=c(p)\geq 0$ independent of $t$. If additionally the semigroup generated by $\tL$ is exponentially stable, $\|\tilde{z}(t)-z(t)\|_2$ can be bounded uniformly in $t \geq 0$.
\end{corollary}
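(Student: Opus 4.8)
The plan is to dispense with Gr\"onwall's inequality and instead argue via the variation-of-constants (Duhamel) formula, which passes qualitative properties of the matrix semigroup $(e^{t\LV})_{t\ge 0}$ directly into the error bound. First I would recall from the proof of Lemma~\ref{lem:gronwall} that the error $e := z - \tilde z$ solves the inhomogeneous linear system $\dot e(t) = \LV e(t) - (\tL - \LV)\tilde z(t)$ with $e(0) = 0$. Since $\LV\in\R^{N\times N}$, the unique solution is
\[
e(t) = -\int_0^t e^{(t-s)\LV}(\tL - \LV)\,\tilde z(s)\,\mathrm{d}s,
\]
so that, by submultiplicativity of the spectral norm,
\[
\|e(t)\|_2 \le \|\tL - \LV\|_2 \int_0^t \|e^{(t-s)\LV}\|_2\,\|\tilde z(s)\|_2\,\mathrm{d}s.
\]

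For the first assertion I would insert the uniform bound $\|e^{\tau\LV}\|_2\le M$, $\tau\ge 0$, which immediately collapses the integral to $M\,\|\tilde z\|_{L^1(0,t;\R^N)}$. For the second assertion I would use exponential stability in the form $\|e^{\tau\LV}\|_2\le M e^{-\omega\tau}$ with $\omega>0$ (note that evaluating at $\tau=0$ forces $M\ge 1$) and apply H\"older's inequality with conjugate exponents $p,p'$ to the resulting convolution; the kernel factor $\|e^{-\omega(t-\cdot)}\|_{L^{p'}(0,t)}$ is dominated by the $t$-independent constant $c(p):=(\omega p')^{-1/p'}$ for $1<p\le\infty$, and by $c(1):=1$ for $p=1$. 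This gives $\|e(t)\|_2\le M c(p)\|\tL-\LV\|_2\,\|\tilde z\|_{L^p(0,t;\R^N)}$, as claimed. Finally, if the semigroup generated by $\tL$ is also exponentially stable, then $\tilde z(t)=e^{t\tL}z_0$ decays exponentially, hence $\tilde z\in L^p(0,\infty;\R^N)$ and $\|\tilde z\|_{L^p(0,t;\R^N)}\le\|\tilde z\|_{L^p(0,\infty;\R^N)}<\infty$ uniformly in $t$; substituting this into the previous estimate bounds $\|e(t)\|_2$ uniformly in $t\ge 0$.

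I do not expect a genuine obstacle here, as every step is a standard manipulation; the only points that need a little care are (i) checking that the exponentially decaying kernel lies in $L^{p'}(0,\infty)$ with a norm that does not depend on the upper limit $t$, which is what makes the constant $c$ in the statement $t$-independent, and (ii) noting explicitly that in the last assertion the resulting uniform bound also depends on the initial datum $z_0$ through $\|\tilde z\|_{L^p(0,\infty;\R^N)}$ and on the decay rate of $(e^{t\tL})_{t\ge 0}$. It is worth stressing that, in contrast to Corollary~\ref{c:trajest}, the factor $e^{t\|\LV\|_2}$ is avoided entirely, which is precisely the point of invoking stability rather than the crude Gr\"onwall argument.
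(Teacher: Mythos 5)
Your proposal is correct and follows essentially the same route as the paper: the paper likewise derives the error equation $\dot e = \LV e + (\tL-\LV)\tilde z$ with $e(0)=0$, applies the variation-of-constants formula $e(t)=\int_0^t \mathcal{K}^{t-s}_{\mathbb V}(\tL-\LV)\tilde z(s)\,\mathrm{d}s$, and then inserts the uniform bound $M$ resp.\ the exponential bound $Me^{-\omega\tau}$ followed by H\"older's inequality. Your explicit computation of the constant $c(p)=(\omega p')^{-1/p'}$ is a welcome refinement of the paper's unspecified $c=c(p,\omega)$, but the argument is the same.
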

\begin{proof}
Subtracting \eqref{e:tz} from \eqref{e:z} and denoting $e(t)=\tilde{z}(t)-z(t)$ yields the system
\begin{align*}
\dot{e}(t) = \LV e(t)  + (\tL-\LV)\tilde{z}(t).
\end{align*}
Denoting by $\mathcal{K}^t_\mathbb{V}$ the Koopman semigroup generated by $\LV$ yields, using the variation of constants formula
\begin{align*}
e(t) &= \int_0^t \mathcal{K}^{t-s}_\mathbb{V} \left(\tL - \LV\right)\tilde{z}(s)\,\text{d}s
\end{align*}
and hence
\begin{align*}
\|e(t)\|_2 \leq \int_0^t \|\mathcal{K}^{t-s}_\mathbb{V}\|_2 \|\tL - \LV\|_2 \|\tilde{z}(s)\|_2\,\text{d}s.
\end{align*}
If $\mathcal{K}^t_\mathbb{V}$ is bounded by $M$, i.e., $\|\mathcal{K}^t_\mathbb{V}\|\leq M$, we have
\begin{align*}
\|e(t)\|_2 \leq M  \|\tL - \LV\|_2 \|\tilde{z}\|_{L^1(0,t;\mathbb{R}^N)}.
\end{align*}
If $\mathcal{K}^t_\mathbb{V}$ is exponentially stable, i.e., $\|\mathcal{K}^t_\mathbb{V}\|_2\leq Me^{-\omega t}$, we obtain
\begin{align*}
\|e(t)\|_2 \leq M c\|\tL - \LV\|_2 \|\tilde{z}\|_{L^p(0,t;\mathbb{R}^N)}
\end{align*}
for any $1\leq p\leq \infty$ with $c=c(p,\omega)$. If additionally, the semigroup generated by $\tL$ is exponentially stable implying that $\|\tilde{z}(t)\|_2\leq \tilde{M}e^{-\tilde{\omega}t}\|{z}_0\|_2$, this upper bound can be bounded uniformly in $t$.
\end{proof}

\MS{\subsection{Analytical Expressions for the OU Process}
\label{app:ou_analytical}
For the one-dimensional SDE~\eqref{eq:ou_sde}, the Koopman generator is given by:
\begin{align*}
\mathcal{L}\phi &= - x \phi'(x) + \frac{1}{2} \phi''(x).
\end{align*}
The eigenvalues of the generator are given by negative integers $\kappa_l = -l$, eigenvalues of the Koopman operator are their exponentials, as usual, $\lambda_l(t) = e^{- l t}$. The corresponding eigenfunctions are given by scaled physicist's Hermite polynomials. They are orthonormal with respect to the inner product with weight function $\mu$, which is the density of a normal distribution with variance one half, yielding the relations:
\begin{align}
\label{eq:ortho_hermite_poly}
\mu(x) &= \frac{1}{\sqrt{\pi}} \exp(-x^2), & \psi_l &= \frac{1}{\sqrt{2^l (l-1)!}} H_l(x), &\langle H_l, \,H_m\rangle_\mu &= 2^l l! \delta_{lm}.
\end{align}
The monomial basis can be recovered from eigenfunction basis $\psi_i$ by the representation formula:
\begin{align}
\label{eq:expansion_monomials}
x^n &= \frac{n!}{2^n}\sum_{k=0}^{\floor{\frac{n}{2}}} \frac{1}{k! (n - 2k)!}H_{n - 2k}(x).
\end{align}
For a basis set comprised of monomials up to maximal degree $N$, the Galerkin matrices $C$ and $A$ can be obtained as the moments of the normal distribution with variance $0.5$:
\begin{align*}
C_{ij} &= \begin{cases}
\frac{(i+j)!}{2^{i+j} ((i+j)/2)!} & (i+j) \, \text{even}, \\
0 & (i+j) \, \text{odd},
\end{cases} &
A_{ij} &= \begin{cases}
-\frac{ij}{2} \frac{(i+j-2)!}{2^{i+j-2} ((i+j-2)/2)!} & (i+j) \, \text{even}, \\
0 & (i+j) \, \text{odd}.
\end{cases} &
\end{align*}

\noindent For their numerical estimation, we consider centered random variables:
\begin{align*}
\phi_{ij}(x) &= x^i\, x^j - C_{ij} \quad\text{for }C, & \phi_{ij}(x) &= -\frac{ij}{2} x^{i-1}\, x^{j-1} - A_{ij} \quad\text{for }A.
\end{align*}
We calculate the asymptotic variance of the scalar random variable $\phi_{ij}$ if it is defined by either of the two expressions above. We also introduce the quantity $n := i+j$ for C or $n := i+j-2$ for A. The analytical expressions for $C_{ij},\, A_{ij}$ above exactly equal the terms corresponding to $H_0$ in the general expansion for the monomial $x^n$ in~\eqref{eq:expansion_monomials}. As the random variables $\phi_{ij}$ are centered, no contribution from $H_0$ is left. Thereby, we obtain the decomposition for $\phi_{ij}$ (up to the factor $-\frac{ij}{2}$ for estimation of $A$):
\begin{align}
\label{eq:expansion_phi}
\phi_{ij} &= x^n - \mathbb{E}^\mu[x^n] = \frac{n!}{2^n}\sum_{k=0}^{\ceil{\frac{n}{2} - 1}} \frac{1}{k! (n - 2k)!}H_{n - 2k}(x).
\end{align}
Next, we calculate matrix elements with the Koopman operator at lag time $l\Delta_t$ by combining~\eqref{eq:expansion_phi} with the orthogonality relation~\eqref{eq:ortho_hermite_poly}:
\begin{align*}
\langle \phi_{ij}, \, \mathcal{K}^{l\Delta_t}\phi_{ij}\rangle_\mu &= \frac{(n!)^2}{2^{2n}}\sum_{k=0}^{\ceil{\frac{n}{2} - 1}} \frac{1}{(k! (n - 2k)!)^2} e^{-(n-2k)l\Delta_t} 2^{n-2k} (n- 2k)! \\
&= \frac{(n!)^2}{2^{2n}}\sum_{k=0}^{\ceil{\frac{n}{2} - 1}} \frac{2^{n-2k}}{(k!)^2 (n - 2k)!} e^{-(n-2k)l\Delta_t}.
\end{align*}
Finally, by setting $q_k = e^{-(n - 2k)\Delta_t}$, we calculate the asymptotic variance according to the result in Lemma~\ref{lemma:co-variances} (note that the contribution for $l = 0$ appears only once, and that the result needs to be multiplied by $\frac{1}{4}ij$ for the estimation of $A$):
\begin{align*}
\sigma_{\phi_{ij}, \infty}^2 &= \langle \phi_{ij}, \,\phi_{ij}\rangle_\mu + 2 \sum_{l=1}^\infty \langle \phi_{ij}, \, \mathcal{K}^{l\Delta_t} \phi_{ij}\rangle_\mu \\
&= \frac{(n!)^2}{2^{2n}}\sum_{k=0}^{\ceil{\frac{n}{2} - 1}} \frac{2^{n-2k}}{(k!)^2 (n - 2k)!} \left[ \sum_{l=0}^\infty q_k^l + \sum_{l=1}^\infty q_k^l \right] \\
&= \frac{(n!)^2}{2^{2n}}\sum_{k=0}^{\ceil{\frac{n}{2} - 1}} \frac{2^{n-2k}}{(k!)^2 (n - 2k)!} \left[ \frac{1}{1 - q_k} + \frac{q_k}{1 - q_k} \right] \\
&= \frac{(n!)^2}{2^{2n}}\sum_{k=0}^{\ceil{\frac{n}{2} - 1}} \frac{2^{n-2k}}{(k!)^2 (n - 2k)!} \frac{1 + q_k}{1 - q_k}.
\end{align*}}

\end{document}